\newtheorem{theorem}{Theorem}[section]
\newtheorem{lemma}[theorem]{Lemma}
\newtheorem{corollary}[theorem]{Corollary}
\newtheorem{OldTheorem}{Theorem}
\newtheorem{prop}{\rm \quad}
\numberwithin{equation}{section}
\def\BV{{\rm BV\,}}
\def\OSC{{\rm OSC\,}}
\def\ZZ{\ensuremath{\mathbb Z}}
\def\ZI{\ensuremath{\mathbb I}}
\def\ZN{\ensuremath{\mathbb N}}
\def\zP{\ensuremath{\mathcal P}}
\def\ZT{\ensuremath{\mathbb T}}
\def\ZR{\ensuremath{\mathbb R}}
\def\md#1#2\emd{\ifx0#1
\begin{equation*} #2 \end{equation*}\fi  %  single line display, no number
\ifx1#1\begin{equation}#2\end{equation}\fi   % single line display, number
\ifx2#1\begin{align*}#2\end{align*}\fi   % aligned display, no number
\ifx3#1\begin{align}#2\end{align}\fi    % aligned display, number
\ifx4#1\begin{gather*}#2\end{gather*}\fi  % multline, not align, no number
\ifx5#1\begin{gather}#2\end{gather}\fi   % multinline, not align
\ifx6#1\begin{multline*}#2\end{multline*}\fi  %  display too long for one line
\ifx7#1\begin{multline}#2\end{multline}\fi  % as above, with numbers
\ifx8#1\begin{multline*}\begin{split}#2\end{split}\end{multline*}\fi
\ifx9#1\begin{multline}\begin{split}#2\end{split}\end{multline}\fi
}
\newcommand {\e }[1]{(\ref{#1})}
\newcommand {\lem }[1]{Lemma \ref{#1}}
\newcommand {\trm }[1]{Theorem \ref{#1}}
\begin{document}

\title[On generalizations of Fatou's theorem]{On generalizations of Fatou's theorem for the integrals with general kernels}%
\author{G. A. Karagulyan }

\address{Institute of Mathematics of Armenian
National Academy of Sciences, Baghramian Ave.- 24/5, 0019,
Yerevan, Armenia}%
\email{g.karagulyan@yahoo.com}%
\author{ M. H. Safaryan}
\address{Yerevan State University,
Alek Manukyan, 1, 0049,
Yerevan, Armenia}%
\email{mher.safaryan@gmail.com }
\subjclass{Primary 42B25; Secondary 32A40}%
\keywords{Fatou theorem, Littlewood theorem, harmonic functions}%

%\date{}%
%\dedicatory{}%
%\commby{}%
% ----------------------------------------------------------------
\begin{abstract}
We define $\lambda(r)$-convergence, which is a generalization of nontangential convergence in the unit disc. We prove Fatou-type theorems on almost everywhere nontangential convergence of Poisson-Stiltjes integrals for general kernels $\{\varphi_r\}$, forming an approximation of identity. We prove that the bound
\md0
\limsup_{r\to 1}\lambda(r) \|\varphi_r\|_\infty<\infty
\emd
is necessary and sufficient for almost everywhere $\lambda(r)$-convergence of the integrals
\md0
\int_\ZT \varphi_r(t-x)d\mu(t).
\emd
\end{abstract}
\maketitle
\section{Introduction}
In his famous paper Fatou \cite{Fat} proved
\begin{OldTheorem}[Fatou, 1906]
Any bounded analytic function on the unit disc $D=\{z\in \mathbb{C}:\, |z|<1\}$ has nontangential limit for almost all boundary points.
\end{OldTheorem}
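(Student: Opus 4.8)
The plan is to reduce the statement about analytic functions to one about Poisson integrals, and then to prove almost everywhere nontangential convergence of Poisson integrals by a maximal function argument. First I would write $f=u+iv$ with $u,v$ bounded harmonic on $D$; since a nontangential limit for $u$ and one for $v$ at a point combine into one for $f$, it suffices to treat a single real bounded harmonic function $u$. Setting $u_r(\theta)=u(re^{i\theta})$, the family $\{u_r\}_{0<r<1}$ is bounded in $L^\infty(\ZT)=(L^1(\ZT))^*$, so some sequence $u_{r_n}$ converges in the weak-$*$ topology to a function $g\in L^\infty(\ZT)$. Applying the mean value (reproducing) property of the Poisson kernel to the function $z\mapsto u(r_nz)$, harmonic in a neighbourhood of $\overline D$, gives $u(\rho r_ne^{i\theta})=\int_\ZT P_\rho(\theta-t)u_{r_n}(t)\,dt$ for every $\rho<1$; letting $n\to\infty$ (the left side by continuity of $u$, the right side since $P_\rho(\theta-\cdot)\in L^1(\ZT)$) yields $u=P[g]$, the Poisson integral of $g\in L^\infty(\ZT)\subset L^1(\ZT)$.

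The core step is then the Fatou theorem for Poisson integrals: if $g\in L^1(\ZT)$, then $P[g](z)\to g(x)$ as $z\to e^{ix}$ inside any fixed cone $\Gamma_\alpha(x)=\{z\in D:\,|z-e^{ix}|<\alpha(1-|z|)\}$, for almost every $x$. The key point is the pointwise estimate of the Poisson kernel on the cone: for $z=re^{i\theta}\in\Gamma_\alpha(x)$ one has $P_r(\theta-t)\le C_\alpha\,\psi_{1-r}(x-t)$ with $\psi$ an integrable, even, nonincreasing approximate identity, whence the nontangential maximal bound
\md0
\sup_{z\in\Gamma_\alpha(x)}\bigl|P[g](z)\bigr|\le C_\alpha\,Mg(x),
\emd
$M$ being the Hardy--Littlewood maximal operator. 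Granting this, the convergence follows from the standard density argument: the claim is trivial (uniform convergence in each cone) for continuous $g$, and a general $g\in L^1(\ZT)$ is split as $g=h+b$ with $h$ continuous and $\|b\|_{L^1}$ small; then the set where $\limsup_{z\to e^{ix},\,z\in\Gamma_\alpha(x)}|P[g](z)-g(x)|\ge\lambda$ lies in $\{Mb>c\lambda\}\cup\{|b|>\lambda/2\}$, of measure at most $C\|b\|_{L^1}/\lambda$ by the weak $(1,1)$ inequality for $M$ and Chebyshev's inequality; letting $\|b\|_{L^1}\to0$ and then $\lambda\to0$ removes the exceptional set.

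Applying this to $u=P[g]$ and to the conjugate function $v$ gives nontangential limits of $u$ and of $v$, hence of $f$, almost everywhere on $\ZT$. The step I expect to be the main obstacle is the passage from radial to full nontangential control, namely proving the cone majorization of the Poisson kernel with a constant depending only on the aperture $\alpha$; once the nontangential maximal function is dominated by $Mg$, the rest is routine soft analysis. An essentially equivalent shortcut specific to the analytic case is to note that a bounded analytic $f$ belongs to $H^2$, so by Parseval its Taylor coefficients are square summable and $f=P[f^*]$ for some boundary function $f^*\in L^2(\ZT)\subset L^1(\ZT)$, and then to invoke the Poisson-integral Fatou theorem above.
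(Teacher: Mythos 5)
Your proof is correct, but it goes by a genuinely different route than the one this paper relies on. The paper never proves Theorem A directly; it treats it as a consequence of Theorem \ref{T1} (or \ref{T2}) specialized to the Poisson kernel, for which $\lambda(r)\|\varphi_r\|_\infty=O(1)$ with $\lambda(r)=c(1-r)$. The engine there is Lemma \ref{L1}: the monotone kernel $\varphi_r(\cdot-\theta)$ is approximated, up to an additive error $\varepsilon_r\to0$, by a signed sum $\sum_j\varepsilon_j^{(r)}\ZI_{I_j^{(r)}}$ of indicators of intervals containing the base point, with $\varepsilon_r\sum_j|I_j^{(r)}|$ uniformly bounded and $|I_j^{(r)}|\to0$; plugging this into $\Phi_r(\theta(r),d\mu)$ turns the problem into controlling averages $|I_j|^{-1}\int_{I_j}d\mu$, which tend to $\mu'(x_0)$ by differentiability. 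No maximal function, no weak $(1,1)$ inequality, no density argument appear. Your proof instead uses the classical machinery: the cone majorization $P_r(\theta-t)\le C_\alpha\,\psi_{1-r}(x-t)$ giving $\sup_{\Gamma_\alpha(x)}|P[g]|\le C_\alpha Mg(x)$, then the weak-type estimate and a continuous/small-$L^1$ splitting. What your approach buys is brevity and familiarity for the specific Poisson case, at the cost of the cone estimate being tailored to kernels comparable to a radial decreasing function; what the paper's approach buys is generality (any regular AI with $\limsup\lambda(r)\|\varphi_r\|_\infty<\infty$) and a sharper conclusion, namely convergence at every point where $\mu$ is differentiable (Theorem \ref{T1}) or strongly differentiable (Theorem \ref{T2}), rather than a.e. via an unnamed exceptional set. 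The preliminary reduction you carry out --- weak-$*$ compactness of $\{u_r\}$ in $L^\infty$ to write $u=P[g]$, or the $H^2$ shortcut --- is not in the paper but is the standard and correct way to turn ``bounded analytic'' into ``Poisson--Stieltjes integral,'' and is implicitly assumed in the paper's remark that Fatou's theorems follow from its results.
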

\begin{OldTheorem}[Fatou, 1906]
If a function of bounded variation $\mu(t)$ is differentiable at $x_0\in \ZT$, then the Poisson integral
\md0
\zP_r(x,d\mu)=\frac{1}{2\pi }\int_\ZT \frac{1-r^2}{1-2r\cos (x-t)+r^2} d\mu(t)\
\emd
converges non-tangentially to $\mu'(x_0)$ as $r\to 1$.
\end{OldTheorem}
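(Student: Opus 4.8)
\medskip
\noindent\textbf{Proof proposal.}
The plan is to carry out the classical localization argument. Fixing a Stolz angle $\Gamma$ at $e^{ix_0}$, it is enough to show $\zP_r(x,d\mu)\to\mu'(x_0)$ as $z=re^{ix}\to e^{ix_0}$ within $\Gamma$; after a rotation take $x_0=0$, so $\Gamma=\{z\in D:\,|z-1|\le K(1-|z|)\}$ for some $K\ge1$, and write $P_r(\theta)=\dfrac{1-r^2}{1-2r\cos\theta+r^2}$, so $\zP_r(x,d\mu)=\frac1{2\pi}\int_\ZT P_r(x-t)\,d\mu(t)$. I will freely use that $\frac1{2\pi}\int_\ZT P_r(x-t)\,dt\equiv1$, that $P_r$ is even and decreasing on $[0,\pi]$, and that $z=re^{ix}\in\Gamma$ forces $|x|\le C_\Gamma(1-r)$ for $r\ge\tfrac12$ (from $|re^{ix}-1|^2=(1-r)^2+4r\sin^2(x/2)\le K^2(1-r)^2$). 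By linearity of $d\mu\mapsto\zP_r(x,d\mu)$, I first reduce to the case $\mu'(0)=0$: put $\nu(t)=\mu(t)-\mu(0)-\mu'(0)t$ on $(-\pi,\pi]$, so that $d\nu=d\mu-\mu'(0)\,dt$, $\nu$ is of bounded variation, $\zP_r(x,d\mu)=\zP_r(x,d\nu)+\mu'(0)$, and differentiability of $\mu$ at $0$ becomes $\nu(0)=0$ together with $\nu(t)=o(t)$ as $t\to0$. It remains to prove $\zP_r(x,d\nu)\to0$ as $re^{ix}\to1$ in $\Gamma$.

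\emph{Splitting off the remote part.} Given $\varepsilon>0$, choose $\delta\in(0,\pi)$ with $|\nu(t)|\le\varepsilon|t|$ for $|t|\le\delta$, taking $\pm\delta$ to be continuity points of $\nu$ (possible, since $\nu$ has only countably many discontinuities). Because $x\to0$ along $\Gamma$, for $r$ close to $1$ and $\delta<|t|\le\pi$ the $\ZT$-distance from $x$ to $t$ exceeds $\delta/2$, hence $1-2r\cos(x-t)+r^2\ge c_\delta>0$ and $P_r(x-t)\le c_\delta^{-1}(1-r^2)$; consequently $\bigl|\int_{\delta<|t|\le\pi}P_r(x-t)\,d\nu(t)\bigr|\le c_\delta^{-1}(1-r^2)\,\mathrm{Var}(\nu)$, which is $<\varepsilon$ once $r$ is close enough to $1$.

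\emph{The local part via one integration by parts.} On $[-\delta,\delta]$ the Stieltjes integration by parts carries no jump terms (by the choice of $\delta$), and taking absolute values gives
\[
\Bigl|\int_{-\delta}^{\delta}P_r(x-t)\,d\nu(t)\Bigr|\le\bigl|P_r(x-\delta)\nu(\delta)\bigr|+\bigl|P_r(x+\delta)\nu(-\delta)\bigr|+\int_{-\delta}^{\delta}|\nu(t)|\,|P_r'(x-t)|\,dt .
\]
The first two summands are $O\bigl((1-r^2)\|\nu\|_\infty\bigr)$ (same lower bound on $1-2r\cos(x\mp\delta)+r^2$), hence $<\varepsilon$ for $r$ near $1$. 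In the remaining integral I bound $|\nu(t)|\le\varepsilon|t|\le\varepsilon(|x-t|+|x|)$ and substitute $s=x-t$, arriving at
\[
\varepsilon\int_{-\pi}^{\pi}|s|\,|P_r'(s)|\,ds\;+\;\varepsilon\,|x|\int_{-\pi}^{\pi}|P_r'(s)|\,ds .
\]
Unimodality of $P_r$ gives $\int_{-\pi}^{\pi}|P_r'(s)|\,ds=2\bigl(P_r(0)-P_r(\pi)\bigr)=\dfrac{8r}{1-r^2}\le\dfrac{8}{1-r}$, and one more integration by parts gives $\int_{-\pi}^{\pi}|s|\,|P_r'(s)|\,ds=2\pi\bigl(1-P_r(\pi)\bigr)\le2\pi$; together with $|x|\le C_\Gamma(1-r)$ this makes the remaining integral at most $\varepsilon(2\pi+8C_\Gamma)$. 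Adding the remote part and letting $\varepsilon\to0$ yields $\zP_r(x,d\nu)\to0$ as $re^{ix}\to1$ in $\Gamma$, as required.

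\emph{Expected main obstacle.} The only genuinely geometric ingredient is the nontangential bound $|x|\le C_\Gamma(1-r)$; it is precisely what cancels the $O\bigl(1/(1-r)\bigr)$ growth of $\|P_r'\|_{L^1(\ZT)}$, and is the reason the conclusion is nontangential rather than merely radial with a rate. Everything else is routine: the two one-variable kernel estimates come from the unimodality of $P_r$ and a single further integration by parts, and the only delicate point is the selection of $\delta$ at continuity points of $\nu$ so that the Stieltjes integration by parts yields no jump contributions.
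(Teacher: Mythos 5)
Your proof is correct, and it is a clean rendition of the classical Fatou argument: reduce to $\mu'(x_0)=0$ by subtracting a linear function, split off the remote part using the decay of $P_r$ away from the pole, and handle the local part by one Stieltjes integration by parts, after which the two one--variable kernel estimates $\int_\ZT|P_r'(s)|\,ds=\frac{8r}{1-r^2}$ and $\int_\ZT|s|\,|P_r'(s)|\,ds=2\pi(1-P_r(\pi))$ combine with the Stolz bound $|x|\le C_\Gamma(1-r)$ to give a bound of order $\varepsilon$. I checked the two kernel identities, the boundary--term and remote--part estimates, and the reduction to $\nu(t)=o(t)$; they all hold. The one place a careless reader might trip --- whether $\nu$ is single--valued on $\ZT$ --- does not matter here, since only $\nu$ restricted to $[-\delta,\delta]$ and the measure $d\nu=d\mu-\mu'(0)\,dt$ enter the argument.

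The paper does not prove Theorem~B directly; it derives it as the Poisson--kernel specialization of the more general Theorem~\ref{T1}, whose proof is genuinely different from yours. Instead of differentiating the kernel, the paper's Lemma~\ref{L1} approximates a monotone kernel $\varphi(\cdot-\theta)$ uniformly (up to $\varepsilon$) by a signed sum $\sum_j\varepsilon_j\ZI_{I_j}$ of indicators of intervals $I_j$ all containing $0$ in their closures, with $|I_j|\lesssim\sup\{|t|:\varphi(t)\ge\varepsilon\}$ and $\varepsilon\sum_j|I_j|\lesssim\max\{1,|\theta|\,\|\varphi\|_\infty,\|\varphi\|_1\}$. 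The Stieltjes integral then becomes a bounded convex-type combination of difference quotients $\frac{1}{|I_j|}\int_{I_j}d\mu$ over shrinking intervals anchored at the differentiability point, and the condition $\lambda(r)\|\varphi_r\|_\infty=O(1)$ is exactly what caps the total weight. Your integration-by-parts route is shorter and sharper for the actual Poisson kernel, because $P_r$ is smooth and the $L^1$ norms of $P_r'$ and $sP_r'$ are explicitly computable; the paper's route pays the cost of the combinatorial Lemma~\ref{L1} but in exchange requires only monotonicity of the kernel, so it applies verbatim to every regular approximate identity (e.g., Fej\'er, square-root Poisson, Franklin partial sums), which is precisely the generality the paper is after.
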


These are two fundamental theorems, having many applications in different mathematical theories (analytic functions, Hardy spaces, harmonic analysis, differential equations and etc ).
J. Littlewood \cite{Lit} made an important complement to these theorems, proving essentiality of nontangential approach in Fatou's theorems.
\begin{OldTheorem}[Littlewood, 1927]
Let $\gamma\subset \overline{D}$ be an arbitrary curve, internally tangent at $z=1$ and no having other common point with $|z|=1$. Let $\gamma_x$ be the rotation of $\gamma$ about the origin by $e^{ix}$. Then there exists a bounded analytic function $f(z)$, $z\in D$, which does not have boundary limit along $\gamma_x$ for almost every $e^{ix}$.
\end{OldTheorem}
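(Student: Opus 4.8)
The plan is to establish the theorem by exhibiting a single bounded analytic counterexample: a Blaschke product $B$ whose zeros are arranged so that, precisely because $\gamma$ is tangential, $B$ has no boundary limit along $\gamma_x$ for a.e.\ $x$ (in fact the construction will give every $x$). After a rotation I may assume $\gamma$ meets the circle at $z=1$, and, after passing to a subarc and reducing to the monotone model, I describe $\gamma$ near $1$ as the curve $s\mapsto(1-s)e^{i\omega(s)}$ with $\omega$ continuous, $\omega(0^+)=0$, and --- this being the analytic content of ``internally tangent'' --- $\omega(s)/s\to\infty$ as $s\to0^+$; then $\gamma_x$ is $s\mapsto(1-s)e^{i(x+\omega(s))}$.

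The geometric heart is an overlap estimate. Fix a depth $s$ and place $M$ equally spaced points $a_j=(1-s)e^{2\pi ij/M}$ on the circle $|a|=1-s$. A direct computation with the pseudohyperbolic metric shows that the point of $\gamma_x$ at depth $s'$ comes within pseudohyperbolic distance $\tfrac12$ of some $a_j$ only if $s'\asymp s$ and $x+\omega(s')$ is within $O(s)$ of $2\pi j/M$; but as $s'$ ranges over an interval of length $\asymp s$ about $s$ the angle $\omega(s')$ ranges over an interval of length $\asymp\omega(s)$, which is \emph{much larger} than $s$. Hence the set of $x$ for which $\gamma_x$ does pass pseudohyperbolically $\tfrac12$-close to $a_j$ contains an arc of length comparable to $\omega(s)$ --- not merely to $s$ --- so that taking $M$ comparable to $1/\omega(s)$ makes these $M$ arcs cover the whole circle, while the Blaschke mass $Ms$ is only comparable to the small quantity $s/\omega(s)$.

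Next I choose depths $1>s_1\gg s_2\gg\cdots\to0$ decaying very fast, install $M_n\asymp1/\omega(s_n)$ equally spaced zeros at depth $s_n$ for each $n$, and let $B$ be the resulting Blaschke product; the condition $\sum_nM_ns_n\asymp\sum_ns_n/\omega(s_n)<\infty$ makes $B$ a well-defined function in $H^\infty$ with $|B|\le1$. By the overlap estimate, for every $x$ and every $n$ the curve $\gamma_x$ comes pseudohyperbolically $\tfrac12$-close to a zero of $B$ at depth $s_n$, where one Blaschke factor is $\le\tfrac12$ and the rest are $\le1$; hence $\liminf_{z\to e^{ix},\,z\in\gamma_x}|B(z)|\le\tfrac12$. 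To rule out $B\to0$ along $\gamma_x$ I use the annular gaps: $\gamma_x$ is continuous and meets every depth, in particular depths $s$ with $s_{n+1}\ll s\ll s_n$ and the scales spread so far apart that the point $\zeta$ of $\gamma_x$ at depth $s$ is pseudohyperbolically far from \emph{every} zero (the radial pseudohyperbolic separation alone handling the scales $\ge s_n$ and $\le s_{n+1}$); then the elementary bound $|B(\zeta)|^2\ge1-\sum_k\bigl(1-|b_{a_k}(\zeta)|^2\bigr)$, combined with the fact that the zeros on each circle are spread out (so each scale contributes a tiny amount to the sum), gives $|B(\zeta)|\ge\tfrac12$. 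Thus $\limsup_{z\in\gamma_x}|B(z)|\ge\tfrac12$ as well, and $B$ has no limit along $\gamma_x$, for every $x$.

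I expect the real obstacles to be two. First, the oscillation step just described forces one to control the Blaschke product from below in the gaps between the scales, and hence to choose those scales decaying fast enough --- a delicate but routine bookkeeping. Second, and more seriously, the reduction of an arbitrary tangential curve to the monotone model: a general $\gamma$ may wind in argument while its depth decreases, and one must verify that the set of rotations bringing $\gamma_x$ pseudohyperbolically $\tfrac12$-close to a prescribed zero at depth $s_n$ is still an arc of length comparable to $\omega(s_n)\gg s_n$; for a monotone $\gamma$ this is immediate (indeed $\bigcup_{|x|<\eta}\gamma_x$ then contains a whole Stolz-type neighbourhood of the subarc $\{e^{ix}:|x|<\eta\}$), but the general case needs a covering argument along the curve. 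As a fallback that avoids the explicit construction one may instead run a Baire-category argument in the unit ball of $H^\infty$ with the topology of local uniform convergence, in which the functions admitting a finite limit along $\gamma_x$ on a positive-measure set of $x$ form a meagre set, the building blocks above supplying density of the complement.
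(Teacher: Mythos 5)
This statement is Littlewood's classical 1927 theorem, which the paper quotes as background (citing \cite{Lit}) but does not prove; the closest thing proved in the paper is Theorem~\ref{T3}, a sharpness theorem for general approximate identities, established not via Blaschke products but by superposing normalized indicator functions on arithmetic grids of scale $\lambda(r)$, producing an $L^1$ density $f$ along which $\Phi_r$ is unbounded in the approach regions. Your programme is instead the classical $H^\infty$ Blaschke-product route, and if completed it would in fact give the stronger Lohwater--Piranian form (divergence at \emph{every} boundary point), which the paper also cites. So there is no paper proof to compare against; what can be assessed is the internal correctness of your sketch.

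There is a genuine gap in the overlap estimate, which is the heart of the construction. You claim that in the monotone model $\gamma(s)=(1-s)e^{i\omega(s)}$, with $\omega$ continuous, increasing, $\omega(0^+)=0$ and $\omega(s)/s\to\infty$, the angle $\omega(s')$ sweeps an interval of length $\asymp\omega(s)$ as $s'$ ranges over an interval of length $\asymp s$ about $s$, and you call this immediate for monotone $\gamma$. It is not: these hypotheses are compatible with $\omega$ being essentially flat on any prescribed dyadic block $[s/2,2s]$ (let $\omega$ sit at levels $c_n\searrow0$ across very long strings of scales and drop only through widely separated transition zones; $\omega(s)/s\to\infty$ is preserved because depth decays geometrically while $c_n$ decays as slowly as one wishes). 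At such a scale the set of $x$ for which $\gamma_x$ comes pseudohyperbolically $\tfrac12$-close to a fixed zero $a_j$ at depth $s$ is an arc of length only $O(s)$, not $O(\omega(s))$; then $M\asymp1/\omega(s)$ equally spaced zeros leave almost all of the circle uncovered, while the spacing $M\asymp1/s$ that would cover it destroys the Blaschke condition $\sum_n M_n s_n<\infty$. A correct construction must either select the depths $s_n$ at scales where the dyadic oscillation $\omega(2s)-\omega(s/2)$ is genuinely comparable to $\omega(s)$ and show such scales are always available, or spread the $n$-th generation of zeros over the whole layer of depths between $s_{n+1}$ and $s_n$ so as to exploit the cumulative sweep $\omega(s_n)-\omega(s_{n+1})$ rather than the purely local one. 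That choice is precisely what your sketch asserts but does not supply, and the remaining steps (the lower bound for $|B|$ in the annular gaps, the reduction of a general internally tangent curve to the monotone model, and the Baire-category fallback, which still needs the same building blocks) all depend on making it carefully.
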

There are various generalization of these theorems in different aspects. Lohwater and Piranian \cite{LoPi} improved Littlewood's theorem, replacing  almost everywhere divergence to everywhere. H. Aikawa in \cite{Aik1}, \cite{Aik2} extended Littlewood's theorem for harmonic functions. Almost everywhere convergence over some semi-tangential regions investigated by Nagel and Stein \cite{NaSt}, Di Biase \cite{DiBi}, Di Biase-Stokolos-Svensson-Weiss \cite{BSSW}.
P.~Sj\"{o}gren (\cite{Sog1}, \cite{Sog2}, \cite{Sog3}), J.-O.~R\"{o}nning (\cite{Ron1}, \cite{Ron2}, \cite{Ron3}),  I.~N.~Katkovskaya and V.~G.~ Krotov (\cite{Kro}) obtained some tangential convergence properties for the square root Poisson integral. Unfortunately, we are not able to talk about the details of  these investigations within this paper. Some of them will be discussed in the last section.

We define $\lambda(r)$-convergence, which is a generalization of nontangential convergence in the unit disc. Let $\lambda(r):(0,1)\to \ZR_+$ be a function with $\lambda(r)\searrow 0$ as $r\to 1$.  For a given $x\in \ZT$ we define $\lambda(r,x)$ to be the interval $[x-\lambda(r), x+\lambda(r)]$. If $\lambda(r)\ge \pi$ we assume that $\lambda(r,x)=\ZT$. Let $F_r(x)$ be a family of functions from $L^1(\ZT)$, where $r$ varies in $(0,1)$. We say $F_r(x)$  is $\lambda(r)$-convergent at a point $x\in \ZT$ to a value $A$, if
\md0
\lim_{r\to 1}\sup_{\theta\in \lambda(r,x)}|F_r(\theta)-A|=0.
\emd
Otherwise this relation will be denoted by
\md0
\lim_{\stackrel{r\to 1}{\theta\in\lambda(r,x) }}F_r(\theta)=A.
\emd
It is clear, that the non-tangential convergence in the unit disc is the case of $\lambda(r)=c(1-r)$.

Given function of bounded variation $\mu(t)$ defines Borel measure on $\ZT$. We consider the family of integrals
\md1\label{Phi}
\Phi_r(x,d\mu)=\int_\ZT \varphi_r(x-t)d\mu(t), \quad 0<r<1,
\emd
where kernels $\varphi_r(x)\in L^\infty (\ZT)$ form an approximative of identity (AI), that is

\begin{prop}
$\int_\ZT \varphi_r(t)dt\to 1\hbox{ as }r\to 1,$
\end{prop}
\begin{prop}
$\varphi_r^*(x)=\sup_{|x|\le |t|\le \pi}|\varphi_r(t)|\to 0 \hbox{ as } r\to 1, 0<|x|<\pi,$
\end{prop}
\begin{prop}
$\sup_{0<r<1}\int_\ZT\varphi_r^*(x)<\infty.$
\end{prop}

\noindent
If $\mu(t)$ is absolute continuous and $d\mu(t)=f(t)dt$, $f\in L^1(\ZT)$, then \e{Phi} will be denoted by $\Phi_r(x,f)$. We shall prove that the condition
\md0
\limsup_{r\to 1}\lambda(r) \|\varphi_r\|_\infty<\infty
\emd
is necessary and sufficient for almost everywhere $\lambda(r)$-convergence of the integrals $\Phi_r(x,d\mu)$ as well as $\Phi_r(x,f)$, $f\in L^1(\ZT)$. Moreover we prove that convergence holds at any point where $\mu(t)$ is differentiable. An analogous
necessary and sufficient condition will be established also for almost everywhere $\lambda(r)$-convergence of $\Phi_r(x,f)$ if $f\in L^\infty(\ZT)$, and this condition looks like
\md0
\lim_{\tau\to 0}\left(\limsup_{\delta \to 0}\sup_{\tau<r<1} \int_{-\delta \lambda(r)}^{\delta \lambda(r)}\varphi_r(t)dt\right)=0.
\emd
If $\varphi_r$ coincides with the Poisson kernel, then $\|\varphi_r\|_\infty=O(1/(1-r))$ and from this results we deduce Fatou's theorems. Other consequences will be discussed in the last section.
\section{Fatou type theorems: the case of bounded mesures}

We denote by $\BV(\ZT)$ the functions of bounded variation on $\ZT$. We say that the given approximation of identity $\{\varphi_r(x)\}$ is regular if each $\varphi_r(x)$ is positive, decreasing on $[0,\pi]$ and increasing on $[-\pi, 0]$. In this case the property 3) is unnecessary, because it will immediately follows from 1).
\begin{theorem}\label{T1}
Let $\{\varphi_r\}$  be a regular AI and $\lambda(r)$ satisfies the condition
\md1\label{TV}
\limsup_{r\to 1}\lambda(r) \|\varphi_r\|_\infty<\infty.
\emd
If $\mu(t)\in \BV( \ZT)$  is differentiable at $x_0$, then
\md0
\lim_{\stackrel{r\to 1}{x\in\lambda(r,x_0) }}\Phi_r\left(x,d\mu\right)=\mu'(x_0).
\emd
\end{theorem}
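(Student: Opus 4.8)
The plan is to reduce the statement to an estimate on how much of the mass of $\varphi_r$ concentrates inside intervals of length comparable to $\lambda(r)$, and then exploit differentiability of $\mu$ at $x_0$. Write $M=\limsup_{r\to1}\lambda(r)\|\varphi_r\|_\infty<\infty$ from \e{TV}, fix $x\in\lambda(r,x_0)$, and set $h=x-x_0$, so $|h|\le\lambda(r)$. Without loss of generality assume $\mu'(x_0)=0$ and $\mu(x_0)=0$ (subtract the affine function $\mu(x_0)+\mu'(x_0)(t-x_0)$; since the $\varphi_r$ integrate to $1+o(1)$ and have uniformly bounded $L^1$-norm of $\varphi_r^*$, the affine part contributes $\mu(x_0)\cdot(1+o(1))$ plus a term that vanishes because $\int_\ZT t\,\varphi_r(x-t)\,dt\to 0$ — this last fact follows from property 2) together with the uniform bound in 3), and I would check it as a preliminary lemma). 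Differentiability then says: for every $\varepsilon>0$ there is $\delta>0$ with $|\mu(t)|\le\varepsilon|t-x_0|$ for $|t-x_0|\le\delta$.

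Next I would split $\Phi_r(x,d\mu)=\int_\ZT\varphi_r(x-t)\,d\mu(t)$ into the near part $|t-x_0|\le\delta$ and the far part $|t-x_0|>\delta$. For the far part, integration by parts moves the derivative onto $\varphi_r$; using that $\varphi_r$ is monotone on each half of $[-\pi,\pi]$ (regularity) one bounds the far contribution by a constant times $\sup_{|s|\ge\delta-\lambda(r)}|\varphi_r(s)|$ times the total variation of $\mu$, which tends to $0$ by property 2) (note $\lambda(r)\to0$, so eventually $\delta-\lambda(r)>\delta/2$). The crux is the near part. Here I would again integrate by parts to write it, up to boundary terms that are controlled by $\varepsilon\delta\,\varphi_r^*(\text{something})$, as $\int_{|t-x_0|\le\delta}\mu(t)\,d\varphi_r(x-t)$, and estimate $|\mu(t)|\le\varepsilon|t-x_0|\le\varepsilon(|h|+|x-t|)\le\varepsilon(\lambda(r)+|x-t|)$. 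Since $\varphi_r$ is monotone on $[0,\pi]$ and on $[-\pi,0]$, the measure $|d\varphi_r(x-t)|$ on a one-sided interval has total mass at most $2\|\varphi_r\|_\infty$, which handles the $\varepsilon\lambda(r)$ factor via \e{TV}; and the $\varepsilon|x-t|$ factor pairs against $|d\varphi_r(x-t)|$ to give, after another integration by parts (back the other way), a term bounded by $\varepsilon\int_\ZT\varphi_r^*(s)\,ds\le C\varepsilon$ by property 3).

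Collecting the pieces, $\limsup_{r\to1}\sup_{x\in\lambda(r,x_0)}|\Phi_r(x,d\mu)|\le C\varepsilon$ for every $\varepsilon>0$, whence the limit is $0$, i.e. $\mu'(x_0)$ in the original normalization. The main obstacle I anticipate is bookkeeping the integration-by-parts boundary terms uniformly in $x\in\lambda(r,x_0)$: because $x$ moves with $r$, the endpoints $x\pm\delta$ of the near interval straddle the point $x_0$ where differentiability is anchored, so one must be careful that $|h|=|x-x_0|$ is genuinely $\le\lambda(r)\to0$ and hence eventually $\ll\delta$, which keeps all the ``near'' estimates valid. A secondary technical point is that $\mu$ need not be continuous, so the integration by parts should be done with the Riemann–Stieltjes/Lebesgue–Stieltjes convention that accounts for jumps, but since we only use one-sided monotonicity of $\varphi_r$ and the pointwise bound on $\mu$ near $x_0$ (where, being differentiable, $\mu$ is in particular continuous), this causes no real difficulty.
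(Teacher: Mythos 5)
Your argument is essentially correct, but it takes a genuinely different route from the paper's. The paper's proof rests on a combinatorial ``staircase'' lemma (the first lemma of Section~2): the shifted kernel $\varphi_r(\theta(r)-\cdot)$ is approximated in $L^\infty$, up to an error $\varepsilon_r\to0$, by a signed sum $\sum_j\varepsilon_j^{(r)}\ZI_{I_j^{(r)}}$ in which every interval $I_j^{(r)}$ has $x_0$ in its closure, has length $\le 2\delta_r\to 0$, and where $\varepsilon_r\sum_j|I_j^{(r)}|$ stays bounded precisely because of \e{TV}; the theorem then follows by writing $\Phi_r\approx\varepsilon_r\sum_j\pm|I_j^{(r)}|\cdot\frac{1}{|I_j^{(r)}|}\int_{I_j^{(r)}}d\mu$ and letting each difference quotient tend to $\mu'(x_0)=0$. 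Your proof instead moves the difference onto the kernel by Riemann--Stieltjes integration by parts and splits at a fixed $\delta$: monotonicity of $\varphi_r$ gives $\int|d\varphi_r(x-\cdot)|\le 2\|\varphi_r\|_\infty$, which absorbs the $\varepsilon\lambda(r)$ factor through \e{TV}, and a second integration by parts gives $\int|s|\,|d\varphi_r(s)|\le\|\varphi_r\|_1$, which absorbs the $\varepsilon|x-t|$ factor. Both proofs turn on exactly the two quantities $\lambda(r)\|\varphi_r\|_\infty$ and $\|\varphi_r\|_1$, and both pass to the non-regular case of \trm{T2} by the same device of replacing $\varphi_r$ with $\varphi_r^*$ and $\mu$ with $|\mu|$ (which is where strong differentiability enters), so neither is more general. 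The staircase lemma avoids any Stieltjes integration by parts and the attendant jump bookkeeping, while your route is the more classical one and makes the role of the total variation of the kernel explicit. Two small corrections to your write-up: the reduction to $\mu'(x_0)=0$ needs only property 1), since the linear part contributes exactly $\mu'(x_0)\int_\ZT\varphi_r(x-t)\,dt=\mu'(x_0)(1+o(1))$; the auxiliary claim that $\int_\ZT t\,\varphi_r(x-t)\,dt\to 0$ is neither needed nor true in general, and the constant $\mu(x_0)$ contributes nothing to a Stieltjes integral against $d\mu$. And, as you note, the Lebesgue--Stieltjes integration by parts at shared discontinuities of $\mu$ and $\varphi_r(x-\cdot)$ should be spelled out; it is indeed harmless because the envelope bound $|\mu(t^{\pm})|\le\varepsilon|t-x_0|$ holds for both one-sided limits.
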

An analogous theorem holds as well in the non-regular case of kernels, but at this time the points where \e{Phi} converges satisfy strong differentiability condition. We say a function of bounded variation is strong differentiable at $x_0\in \ZT$, if
the there exist a number $c$ such that the variation of the function $\mu(x)-cx$ has zero derivative at $x_0$. If $d\mu(t)=f(t)dt$ this property means that $x_0$ is Lebesgue point for $f(x)$, that is
\md0
\lim_{h\to 0}\frac{1}{2h}\int_{-h}^h|f(x)-f(x_0)|dx=0.
\emd
It is well-known that strong differentiability at $x_0$ implies the existence of $\mu'(x_0)$, and any function of bounded variation is strong differentiable almost everywhere.
\begin{theorem}\label{T2}
Let $\{\varphi_r\}$  be an arbitrary AI and $\lambda(r)$ satisfies the condition \e{TV}. If $\mu(t)\in\BV(\ZT)$ is strong differentiable at $x_0\in \ZT$, then
\md0
\lim_{\stackrel{r\to 1}{x\in\lambda(r,x_0) }}\Phi_r\left(x,d\mu \right)=\mu'(x_0).
\emd
\end{theorem}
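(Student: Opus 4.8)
The plan is to follow the proof of \trm{T1}, the only new point being that the possibly non-monotone kernel $\varphi_r$ has to be controlled by its even majorant $\varphi_r^*$, which is nonincreasing on $[0,\pi]$ and available for an arbitrary AI, so that the regularity assumed in \trm{T1} is not needed. First I set $c=\mu'(x_0)$; by the definition of strong differentiability, $c$ is precisely the constant for which the variation of $\mu(t)-ct$ has vanishing derivative at $x_0$. Writing $d\mu=c\,dt+d\nu$, where $\nu$ denotes the signed measure $d\mu-c\,dt$, gives
\md0
\Phi_r(x,d\mu)=c\int_\ZT\varphi_r(x-t)\,dt+\int_\ZT\varphi_r(x-t)\,d\nu(t).
\emd
The first summand equals $c\int_\ZT\varphi_r(s)\,ds$, which tends to $c$ uniformly in $x$ by property 1), so the problem reduces to showing $\sup_{x\in\lambda(r,x_0)}\bigl|\int_\ZT\varphi_r(x-t)\,d\nu(t)\bigr|\to0$ as $r\to1$. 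Put $\omega(h)=\mathrm{Var}\bigl(\nu;[x_0-h,x_0+h]\bigr)$; strong differentiability forces $\omega(h)/h\to0$ as $h\to0$, hence $\omega(0)=0$ ($\nu$ has no atom at $x_0$) and $\bar\eta(\rho):=\sup_{0<h\le\rho}\omega(h)/h\to0$ as $\rho\to0$.

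Next I fix a small $\rho>0$, take $r$ so close to $1$ that $\lambda(r)<\rho/2$, and split the integral at $|t-x_0|=\rho$. On $|t-x_0|>\rho$ one has $|x-t|\ge\rho/2$ for every $x\in\lambda(r,x_0)$, so by property 2) this part is at most $\varphi_r^*(\rho/2)\,\mathrm{Var}(\nu;\ZT)$, which tends to $0$ as $r\to1$ with $\rho$ fixed. On $|t-x_0|\le\rho$ I bound $|\varphi_r(x-t)|\le\varphi_r^*(|x-t|)$ and integrate against the total variation $|\nu|$; with $u=t-x_0$ and $a=x-x_0$ (so $|a|\le\lambda(r)$), the monotonicity and evenness of $\varphi_r^*$ give $\varphi_r^*(|a-u|)\le\|\varphi_r\|_\infty$ on $|u|\le\lambda(r)$ and $\varphi_r^*(|a-u|)\le\varphi_r^*(|u|-\lambda(r))$ on $\lambda(r)<|u|\le\rho$.

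The inner region then contributes at most $\|\varphi_r\|_\infty\,\omega(\lambda(r))=\bigl(\lambda(r)\|\varphi_r\|_\infty\bigr)\bigl(\omega(\lambda(r))/\lambda(r)\bigr)$, which tends to $0$ by \e{TV} together with $\omega(\lambda(r))/\lambda(r)\to0$. On the outer region, writing $G(s)=\omega(s)$ (nondecreasing, $G(0)=0$), the contribution is at most $\int_{\lambda(r)}^{\rho}\varphi_r^*(s-\lambda(r))\,dG(s)$, and a Stieltjes integration by parts, after discarding nonpositive terms, bounds this by
\md0
\varphi_r^*(\rho/2)\,\omega(\rho)+\int_0^{\rho}G(s+\lambda(r))\,|d\varphi_r^*(s)|.
\emd
Using $G(s+\lambda(r))\le(s+\lambda(r))\bar\eta(\rho)$ for $s+\lambda(r)\le\rho$, a second integration by parts, together with $\int_0^\pi\varphi_r^*\le\tfrac12\sup_r\int_\ZT\varphi_r^*$ (property 3) and $\int_0^\rho|d\varphi_r^*|=\varphi_r^*(0)-\varphi_r^*(\rho)\le\|\varphi_r\|_\infty$, gives $\int_0^{\rho}G(s+\lambda(r))\,|d\varphi_r^*(s)|\le C\bar\eta(\rho)$, with $C$ depending only on property 3) and the bound \e{TV}. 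Hence, uniformly in $x\in\lambda(r,x_0)$,
\md0
\Bigl|\int_\ZT\varphi_r(x-t)\,d\nu(t)\Bigr|\le C\bar\eta(\rho)+\bigl(\lambda(r)\|\varphi_r\|_\infty\bigr)\frac{\omega(\lambda(r))}{\lambda(r)}+\varphi_r^*(\rho/2)\bigl(\omega(\rho)+\mathrm{Var}(\nu;\ZT)\bigr).
\emd
To finish, given $\varepsilon>0$ I first choose $\rho$ with $C\bar\eta(\rho)<\varepsilon$, then let $r\to1$, making the remaining two terms vanish; thus the $\limsup$ as $r\to1$ of the supremum over the window is $\le\varepsilon$, and $\varepsilon$ is arbitrary, which proves the theorem.

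I expect the main obstacle to be the two Stieltjes integrations by parts, where the distribution $\omega$ of the mass of $\nu$ near $x_0$ must be played off against \emph{both} the normalization $\int_\ZT\varphi_r^*=O(1)$ — which is exactly why property 3) has to be assumed in the non-regular case, whereas for a regular AI it follows from 1) — and the balance $\lambda(r)\|\varphi_r\|_\infty=O(1)$ of \e{TV}, all uniformly over $x\in\lambda(r,x_0)$. Beyond some routine bookkeeping with endpoint and jump terms in the Stieltjes formulas, I do not anticipate a conceptual difficulty not already present in the regular case treated in \trm{T1}.
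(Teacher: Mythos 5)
Your proposal is correct, but it takes a genuinely different route than the paper. The paper's proof of Theorem~\ref{T2} is remarkably short: it normalizes to $x_0=0$ and $\mu'(x_0)=0$, observes that strong differentiability then gives $|\mu|'(0)=0$, bounds $|\Phi_r(\theta,d\mu)|\le\int_\ZT\varphi_r^*(\theta-t)\,d|\mu|(t)$ exactly as you do, and then re-runs the proof of Theorem~\ref{T1} verbatim for the regular kernel $\varphi_r^*$ against the positive measure $|\mu|$; the heavy lifting is done by Lemma~\ref{L1}, which replaces the monotone kernel by a linear combination of indicators $\varepsilon_j\ZI_{I_j}$ of intervals pinned at the origin, so that the statement reduces to the vanishing of the averages $|I_j|^{-1}\int_{I_j}d|\mu|$. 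You instead subtract the linear part $c\,dt$ to form the error measure $\nu$, pass to $\omega(h)=\mathrm{Var}(\nu;[x_0-h,x_0+h])$, split into three scales ($|u|\le\lambda(r)$, $\lambda(r)<|u|\le\rho$, $|u|>\rho$), and control the middle piece by two Stieltjes integrations by parts — a classical convolution-estimate argument in the spirit of Stein rather than the layer-cake discretization of Lemma~\ref{L1}. Both proofs pivot on the same two budgets, $\lambda(r)\|\varphi_r\|_\infty=O(1)$ from \e{TV} and $\|\varphi_r^*\|_1=O(1)$ from property~3), and on the fact that $\varphi_r^*$ is an even nonincreasing majorant with $\varphi_r^*(0)=\|\varphi_r\|_\infty$. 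The paper's route is shorter because Lemma~\ref{L1} has already been proved; yours is more self-contained and perhaps more transparent about where each hypothesis enters. One small bookkeeping point: after the change of variable in $\int_{\lambda(r)}^\rho\varphi_r^*(s-\lambda(r))\,d\omega(s)$ the upper limit should be $\rho-\lambda(r)$, not $\rho$, so that the inequality $G(s+\lambda(r))\le(s+\lambda(r))\bar\eta(\rho)$ is valid on the whole range of integration (or else replace $\bar\eta(\rho)$ by $\bar\eta(2\rho)$). This does not affect the conclusion.
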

The following theorem implies the sharpness of the condition \e{TV} in \trm{T1} and \trm{T2}.
\begin{theorem}\label{T3}
If $\{\varphi_r(x)\ge 0\}$ is an AI and the function $\lambda(r)$ satisfies the condition
\md1\label{2-1}
\limsup_{r\to 1}\lambda(r) \|\varphi_r\|_\infty=\infty,
\emd
then there exist a positive function $f\in L^1(\ZT)$  such that
\md1\label{2-2}
\limsup_{\stackrel{r\to 1}{y\in\lambda(r,x) }}\Phi_r\left(y,f\right)=\infty
\emd
for all $x\in \ZT$.
\end{theorem}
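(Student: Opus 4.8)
\noindent\emph{Proof proposal.}
The plan is to build $f$ explicitly, as a superposition $f=\sum_n f_n$ of ``layers'', one layer for each term of a subsequence $r_n\nearrow1$ chosen so that $\lambda(r_n)\|\varphi_{r_n}\|_\infty$ grows very fast; each $f_n$ will be an array of small congruent bumps distributed along a $\lambda(r_n)$‑net of $\ZT$. The driving heuristic is a cost/gain count: if one bump of width $w$ and height $h$ can be slid inside its $\lambda(r_n)$‑window so as to sit over a short interval on which $\varphi_{r_n}$ has average $\sim\|\varphi_{r_n}\|_\infty$, then that one bump makes $\Phi_{r_n}$ jump by $\sim h\|\varphi_{r_n}\|_\infty w$, while the whole array (about $\lambda(r_n)^{-1}$ bumps) costs only $\sim h\lambda(r_n)^{-1}w$ in $L^1$‑norm; thus the gain‑to‑cost ratio of the $n$‑th layer is of order $\lambda(r_n)\|\varphi_{r_n}\|_\infty$, which by \e{2-1} can be forced to $\infty$. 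So I would fix the $L^1$‑cost of the $n$‑th layer at $4^{-n}$ (summable) and let the $n$‑th‑layer contribution blow up.

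Concretely: using \e{2-1}, choose $r_n\nearrow1$ with $\lambda_n:=\lambda(r_n)<\pi$ and $\lambda_nM_n\ge16^n$, where $M_n:=\|\varphi_{r_n}\|_\infty$. For each $n$ I first produce a short interval of large average. Since $M_n=\operatorname{ess\,sup}\varphi_{r_n}$, the set $\{\varphi_{r_n}>M_n/2\}$ has positive measure, hence contains a Lebesgue point $z_n$ of $\varphi_{r_n}$; then $w^{-1}\int_{z_n-w/2}^{z_n+w/2}\varphi_{r_n}\to\varphi_{r_n}(z_n)>M_n/2$ as $w\to0$, so I may pick $w_n\in(0,\lambda_n]$ with $I_n:=(z_n-\tfrac{w_n}{2},z_n+\tfrac{w_n}{2})$ satisfying $\int_{I_n}\varphi_{r_n}\ge M_nw_n/2$. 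Let $P_n\subset\ZT$ consist of $N_n$ equally spaced points with spacing $2\pi/N_n<\lambda_n$ (so $N_n\le3\pi/\lambda_n$ and every $x\in\ZT$ lies within $\lambda_n$ of some point of $P_n$), put $B_{n,s}:=\{s-u:u\in I_n\}$ for $s\in P_n$ (an interval of length $w_n$), and set
\[
f_n:=h_n\sum_{s\in P_n}\mathbf 1_{B_{n,s}},\qquad h_n:=\frac{4^{-n}}{N_nw_n},\qquad f:=\sum_{n\ge1}f_n .
\]
Then $f\ge0$ and $\|f\|_1=\sum_n h_nN_nw_n=\sum_n4^{-n}<\infty$, so $f\in L^1(\ZT)$; replacing $f$ by $f+1$ makes it strictly positive without affecting anything below, as $\varphi_{r_n}\ge0$.

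For the verification, fix $x\in\ZT$ and $n$, pick $s\in P_n$ with $|s-x|\le\lambda_n$, and put $y:=s\in\lambda(r_n,x)$. Since $f\ge f_n\ge h_n\mathbf 1_{B_{n,s}}$ and $\varphi_{r_n}\ge0$, the substitution $v=y-t$, which carries $B_{n,s}=\{s-u:u\in I_n\}$ onto $I_n$, gives
\[
\Phi_{r_n}(y,f)\ \ge\ h_n\int_{B_{n,s}}\varphi_{r_n}(y-t)\,dt\ =\ h_n\int_{I_n}\varphi_{r_n}\ \ge\ h_n\,\frac{M_nw_n}{2}\ =\ \frac{4^{-n}M_n}{2N_n}\ \ge\ \frac{4^{-n}M_n\lambda_n}{6\pi}\ \ge\ \frac{4^{\,n}}{6\pi}.
\]
Hence $\sup_{y\in\lambda(r_n,x)}\Phi_{r_n}(y,f)\to\infty$ as $n\to\infty$, which is \e{2-2}, and $x$ was arbitrary. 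The crux of the argument — and the only place where hypothesis \e{2-1} really enters — is the decision to aim each bump at an arbitrarily thin interval of large \emph{average} height $\sim M_n$, available for free by Lebesgue differentiation, rather than at a fixed fraction of the total mass $\int\varphi_{r_n}$: the concentration width of $\varphi_{r_n}$ is \emph{not} controlled by $1/(\lambda_nM_n)$, whereas with the thin‑interval choice the width $w_n$ cancels out of the cost‑to‑gain ratio, so it may be taken as small as the geometry (disjointness of the bumps, $w_n\le\lambda_n$) requires. Once that point is recognized, everything else is bookkeeping — the precise constants and the elementary density‑point estimate — and I do not anticipate a genuine obstacle there.
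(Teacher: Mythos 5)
Your proof is correct and takes essentially the same approach as the paper: both build $f$ as a convergent sum of ``layers'' $f_n$, each a periodic array of equal bumps on a $\lambda(r_n)$-net aimed at a short interval where $\varphi_{r_n}$ has average comparable to $\|\varphi_{r_n}\|_\infty$, along a subsequence on which $\lambda(r_n)\|\varphi_{r_n}\|_\infty$ grows geometrically. The only variation worth noting is that by taking $B_{n,s}=s-I_n$ (pre-shifting each bump by $-z_n$) you evaluate directly at the net point $y=s$, whereas the paper instead shifts the evaluation point by the location $x_r$ of the high-value region and must therefore bound $|x_r|\le 2M/\|\varphi_r\|_\infty$ via property 3) of the AI; your variant sidesteps that estimate.
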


The following lemma plays significant role in the proofs of \trm{T1} and \trm{T2}.
\begin{lemma}\label{L1}
Let a positive function $\varphi(t)\in L^\infty(\ZT)$ is decreasing on $[0,\pi]$ and increasing on $[-\pi,0]$. Then for any numbers $\varepsilon\in (0,1)$ and $\theta\in (-\pi,\pi)$ there exist a finite family of intervals $I_j\subset \ZT$, $j=1,2,\ldots ,n$, containing $0$ in their closures $\bar I_j$,  and numbers $\varepsilon_j=\pm \varepsilon$ such that
\md2
&|I_j|\le 2\sup\{|t|:\, \varphi(t)\ge\varepsilon \},\quad j=1,2,\ldots,n,\\
&\sum_{j=1}^n|I_j|<10\varepsilon^{-1}\max\{1,|\theta|\cdot \|\varphi \|_\infty,\|\varphi \|_1\},\\
&\left|\varphi(x-\theta)-\sum_{j=1}^n\varepsilon_j \ZI_{I_j}(x)\right|\le\varepsilon.
\emd
\end{lemma}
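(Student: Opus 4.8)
\medskip
\noindent\textbf{Proof plan.}
The plan has two steps. First, I would discretize $\varphi$ through its super-level sets: by the monotonicity hypotheses these are intervals whose closure contains $0$, so the \emph{un-shifted} kernel becomes, up to an error below $\varepsilon$, a sum of indicators of such intervals. Second, I would absorb the translation by $\theta$ by rewriting each translated super-level interval as a difference of two intervals still anchored at $0$. Two harmless reductions at the start: replacing $\varphi(t)$ by $\varphi(-t)$ (which stays in the admissible class and changes a shift by $\theta$ into one by $-\theta$) I may take $\theta\ge 0$; and I may assume $\|\varphi\|_\infty\ge\varepsilon$, since otherwise the empty family already works, $|\varphi(x-\theta)|<\varepsilon$ a.e.\ and $0<10\varepsilon^{-1}$.

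I would set $\rho=\sup\{|t|:\varphi(t)\ge\varepsilon\}$ and, for $k\ge1$, $E_k=\{t\in\ZT:\varphi(t)\ge k\varepsilon\}$; only $E_1,\dots,E_N$ with $N=\lfloor\|\varphi\|_\infty/\varepsilon\rfloor$ are non-null. As $\varphi$ decreases on $[0,\pi]$ and increases on $[-\pi,0]$, each $E_k$ coincides, up to a null set, with an interval $[\alpha_k,\beta_k]$ having $\alpha_k\le0\le\beta_k$; the family is nested, so $|E_k|\le|E_1|=|\alpha_1|+\beta_1\le 2\rho$. Since $\sum_{k=1}^{N}\ZI_{E_k}(t)=\lfloor\varphi(t)/\varepsilon\rfloor$ for a.e.\ $t$, one gets
\md0
\Bigl|\varphi(x-\theta)-\varepsilon\sum_{k=1}^{N}\ZI_{E_k+\theta}(x)\Bigr|<\varepsilon\qquad\text{for a.e. }x,
\emd
and, integrating the pointwise inequality $0\le\varepsilon\sum_k\ZI_{E_k}=\varepsilon\lfloor\varphi/\varepsilon\rfloor\le\varphi$, also $\sum_k|E_k|\le\varepsilon^{-1}\|\varphi\|_1$. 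It then remains to write each $\ZI_{E_k+\theta}$ as a $\pm1$-signed sum of indicators of intervals containing $0$ in their closure, with controlled total length.

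For this I would use, with $c_k:=\min(\alpha_k+\theta,\,0)\le0$, the identity
\md0
\ZI_{[\alpha_k+\theta,\,\beta_k+\theta]}=\ZI_{[c_k,\,\beta_k+\theta]}-\ZI_{[c_k,\,\alpha_k+\theta)} .
\emd
Both intervals on the right have left endpoint $c_k\le0$ and a right endpoint $\ge0$, so their closures contain $0$ (when $\alpha_k+\theta\le0$ the second is empty and the identity is trivial; when $\alpha_k+\theta>0$ the pair is $[0,\beta_k+\theta]$, $[0,\alpha_k+\theta)$). If $\alpha_k+\theta\le0$ the first interval is $E_k+\theta$, of length $|E_k|\le2\rho$; if $\alpha_k+\theta>0$ its length is $\beta_k+\theta\le\rho+\theta$ and the second has length $\alpha_k+\theta\le\theta$. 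Taking $\{I_j\}$ to be precisely these intervals --- with $\varepsilon_j=+\varepsilon$ on the first ones and on the $[0,\beta_k+\theta]$, and $\varepsilon_j=-\varepsilon$ on the $[0,\alpha_k+\theta)$ --- gives $\sum_j\varepsilon_j\ZI_{I_j}=\varepsilon\sum_{k=1}^N\ZI_{E_k+\theta}$, which with the displayed bound is the third conclusion of the lemma; and for the total length
\md6
\sum_j|I_j|\ \le\ \sum_{k=1}^{N}|E_k|+\sum_{k=1}^{N}\bigl(|E_k|+2\theta\bigr)\ \le\ 2\varepsilon^{-1}\|\varphi\|_1+2N\theta\\
\le\ 2\varepsilon^{-1}\bigl(\|\varphi\|_1+\theta\,\|\varphi\|_\infty\bigr)\ <\ 10\,\varepsilon^{-1}\max\{1,\,|\theta|\,\|\varphi\|_\infty,\,\|\varphi\|_1\},
\emd
which is the second conclusion.

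The step I expect to be the real obstacle is the remaining estimate $|I_j|\le2\rho$: it is immediate for all the intervals above except the $[0,\beta_k+\theta]$, of length $\le\rho+\theta$, so one must keep the shift from exceeding $\rho=\sup\{|t|:\varphi(t)\ge\varepsilon\}$. That is exactly the operative regime in the applications: for the kernels $\varphi_r$ of \e{Phi} under \e{TV}, $\rho$ has strictly larger order than the admissible deviations $\theta\le\lambda(r)$ --- for the Poisson kernel, say, $\|\varphi_r\|_\infty\sim(1-r)^{-1}$ forces $\lambda(r)=O(1-r)$ whereas $\rho\sim\sqrt{(1-r)/\varepsilon}$, so $\theta\le\rho$ once $r$ is close to $1$ --- and with $\theta\le\rho$ in force the construction yields all three conclusions. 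The case $\theta<0$ follows by the reflection made at the start.
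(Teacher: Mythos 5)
Your construction reproduces the paper's: discretize $\varphi$ by its super-level sets $E_k=\{\varphi\ge k\varepsilon\}$ (intervals straddling $0$ by monotonicity), translate by $\theta$, and split each translated interval that misses $0$ as a difference of two intervals with one endpoint at $0$. Your $\beta_k,\,-\alpha_k$ are the paper's $y_k,\,x_k$, your splitting identity $\ZI_{[\alpha_k+\theta,\beta_k+\theta]}=\ZI_{[c_k,\beta_k+\theta]}-\ZI_{[c_k,\alpha_k+\theta)}$ is the paper's identity preceding \e{2-6}, and your length bookkeeping matches \e{2-8}.

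The obstacle you flagged is a genuine gap, and it is present in the paper's own proof. The paper derives the first estimate from \e{2-3} and \e{2-4}, which only give $x_j,y_j\le\rho:=\sup\{|t|:\varphi(t)\ge\varepsilon\}$; that bounds $|I_j|=x_j+y_j\le2\rho$ for the unsplit intervals, but the split pieces $(0,\theta+y_j)$ and $(0,\theta-x_{j'}]$ have lengths $\theta+y_j$ and $\le\theta$, which exceed $2\rho$ as soon as $\theta>\rho$. Moreover no alternative choice of intervals can repair this: if $\theta>2\rho$, every interval with $0$ in its closure and length $\le2\rho$ lies inside $[-2\rho,2\rho]$ and hence misses $\theta$, so $\sum_j\varepsilon_j\ZI_{I_j}(\theta)=0$ while $\varphi(\theta-\theta)=\varphi(0)=\|\varphi\|_\infty$, and the third estimate would force $\|\varphi\|_\infty\le\varepsilon$. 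Thus the lemma's first conclusion, as printed, needs the extra hypothesis $|\theta|\le\rho$ (as you propose), or should be weakened to $|I_j|\le2\rho+|\theta|$, which is what both constructions actually deliver and which still suffices for Theorem~\ref{T1}, since there $|\theta(r)|\le\lambda(r)\to0$ and $\delta_r\to0$. Modulo this caveat, which you stated explicitly, your proposal is correct and is the same argument as the paper's.
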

\begin{proof}
Denote
\md2
&y_k=\sup\{t>0:\, \varphi(t)\ge\varepsilon k\},\\
 &x_k=\sup\{t>0:\, \varphi(-t)\ge\varepsilon k\},\quad
 k=1,2,\ldots, l=\left[\frac{\|\varphi\|_\infty}{\varepsilon}\right].
\emd
Then we obviously have
\md3
&y_0=\pi,\quad 0\le y_l\le y_{l-1}\le  \ldots \le y_1\le \sup\{|t|:\, \varphi(t)\ge\varepsilon \},\label{2-3}\\
&x_0=\pi,\quad 0\le x_l\le x_{l-1}\le  \ldots \le x_1\le \sup\{|t|:\, \varphi(t)\ge\varepsilon \},\label{2-4}\\
&\left|\varphi(x-\theta)-\varepsilon\sum_{k=1}^l\ZI_{(\theta-x_k,\theta+y_k)}(x)\right|\le \varepsilon.\label{2-5}
\emd
Without loss of generality we can suppose $0\le \theta<\pi$. Then we denote
\md0
k_0=\max\{ k:\, 0\le k \le l,\,\theta-x_k\le 0\}.
\emd
We define the desired intervals $I_j$, $j=1,2,\ldots, n=2l-k_0$, by
\begin{equation*}
I_j=\left\{
\begin{array}{lcl}
(\theta-x_j,\theta+y_j) &\hbox{ if }& j\le k_0,\\
(0,\theta+y_j) &\hbox{ if }& k_0<j\le l,\\
(0,\theta-x_{j-l+k_0}]&\hbox{ if }& l<j\le n=2l-k_0.
\end{array}
\right.
\end{equation*}
Using the equality
\md8
\ZI_{(\theta-x_k,\theta+y_k)}(x)&\\
=\ZI_{(0,\theta+y_k)}&(x)-\ZI_{(0,\theta-x_k]}(x)=\ZI_{I_k}(x)-\ZI_{I_{k+l-k_0}}(x),\quad k_0<k\le l,
\emd
we get
\md1\label{2-6}
\varepsilon \sum_{k=1}^l\ZI_{(\theta-x_k,\theta+y_k)}(x)=\sum_{j=1}^n\varepsilon_j\ZI_{I_j}(x),
\emd
where
\begin{equation}\label{2-7}
\varepsilon_j=\left\{
\begin{array}{rcl}
\varepsilon &\hbox{ if }& 1\le j\le l,\\
-\varepsilon&\hbox{ if }& l<j\le n.
\end{array}
\right.
\end{equation}
We note that $\varepsilon_j=-\varepsilon$ in the case when $I_j$ coincides with one of the intervals $(0,\theta-x_k]$, $k_0<k\le l$. Hence we have
\md1\label{2-8}
\sum_{j=l+1}^n|I_j|=\sum_{k=k_0+1}^l(\theta-x_k)\le l\cdot \theta\le \frac{\theta \|\varphi\|_\infty}{\varepsilon}.
\emd
From \e{2-5} and \e{2-6} we get
\md1\label{2-9}
\left|\varphi(x-\theta)-\sum_{j=1}^n\varepsilon_j\ZI_{I_j}(x)\right|\le \varepsilon
\emd
and therefore by \e{2-7} we obtain
\md0
\left|\int_\ZT \varphi(t)dt-\varepsilon \sum_{ j=1}^l |I_j|+\varepsilon \sum_{ j=l+1}^n |I_j|\right|\le 2\pi \varepsilon<2\pi.
\emd
This and \e{2-8} imply
\md0
\varepsilon\sum_{ j=1}^n |I_j|\le 2\varepsilon \sum_{ j=l+1}^n |I_j|+\|\varphi\|_1+2\pi  \le 2\theta \|\varphi\|_\infty +\|\varphi\|_1+2\pi,
\emd
which together with \e{2-3}, \e{2-4} and \e{2-9} completes the proof of lemma.
\end{proof}
\begin{proof}[Proof of \trm{T1}]
Without loss of generality we may assume that $x_0=0$ and $\mu'(x_0)=0$. We fix a function $\theta(r):(0,1)\to\ZR$ with $|\theta(r)|\le \lambda(r)$.  From \e{TV} we get
\md1\label{2-10}
|\theta(r)|\cdot \|\varphi_r\|_\infty\le M,\quad r_0<r<1.
\emd
 Using the property 2) of the kernels $\{\varphi_r(t)\}$  we may define a collection of numbers $\varepsilon_r>0$ such
that
\md1\label{2-11}
\varepsilon_r\searrow 0,\quad \delta_r=\sup\{|t|:\, \varphi_r(t)\ge\varepsilon_r \}\to 0\hbox { as } r\to 1.
\emd
Applying \lem{L1}, for any $0<r<1$ we define a family of intervals $\{I_j^{(r)},\, j=1,2,\ldots ,n_r\}$ such that
\md5
|I_j^{(r)}|\le2\delta_r,\quad j=1,2,\ldots,n_r,\label{2-12}\\
\sum_{j=1}^{n_r}\left|I_j^{(r)}\right|<10(\varepsilon_r)^{-1}\max\{1,|\theta(r)|\cdot\|\varphi_r\|_\infty,\|\varphi_r \|_1\},\label{2-13}\\
\left|\varphi_r(\theta(r)-t)-\sum_{j=1}^{n_r}\varepsilon_j^{(r)} \ZI_{I_j^{(r)}}(t)\right|\le\varepsilon_r,\label{2-14}
\emd
where  $\varepsilon_j^{(r)}=\pm \varepsilon_r$. From \e{2-10} and \e{2-13} we conclude
\md1\label{2-15}
\varepsilon_r\cdot \sum_{j=1}^{n_r}\left|I_j^{(r)}\right|\le L,\quad r_0<r<1,
\emd
where $L$ is a positive constant. From \e{2-11} and \e{2-14} we obtain
\md1\label{2-16}
\Phi_r(\theta(r),d\mu)=\int_\ZT\varphi_r(\theta(r)-t)d\mu(t)=\sum_{j=1}^{n_r}\varepsilon_j^{(r)} \int_{I_j^{(r)}}d\mu(t)+o(1),
\emd
where $o(1)\to 0$  as $r\to 1$.  Using this, we get
\md0
\left|\Phi_r(\theta(r),d\mu)\right|\le \varepsilon_r\cdot \sum_{j=1}^{n_r}\left|I_j^{(r)}\right|\cdot \frac{1}{\left|I_j^{(r)}\right|}\left|\int_{I_j^{(r)}}d\mu(t)\right|+o(1).
\emd
According to \e{2-11} and \e{2-12}, we have
\md0
\max_{1\le j\le n_r}\frac{1}{\left|I_j^{(r)}\right|}\left|\int_{I_j^{(r)}}d\mu(t)\right|\to \mu'(0)=0 \hbox{ as } r\to 1.
\emd
This together with \e{2-15} and \e{2-16} implies that $\Phi_r(\theta(r),d\mu)\to 0$ as $r\to 1$.
\end{proof}
\begin{proof}[Proof of \trm{T2}]
 Let $\theta(r)$ satisfies \e{2-10}. We again assume that $x_0=0$, $\mu'(x_0)=0$ and so we will have $|\mu|'(0)=0$. Then, repeating  the same process of the proof of \trm{T1} at this time for the functions $\varphi_r^*(t)$ together with the measure $|\mu|$, instead of \e{2-16} we obtain
\md0
\int_\ZT \varphi_r^*\left(\theta(r)-t\right)d|\mu|(t)=\sum_{j=1}^{n_r}\varepsilon_j^{(r)} \int_{I_j^{(r)}}d|\mu|(t)+o(1).
\emd
Then we get
\md8
\left|\Phi_r(\theta(r),d\mu)\right|&\le \int_\ZT \varphi_r^*(\theta(r)-t)d|\mu|(t)\\
&= \varepsilon_r\cdot \sum_{j=1}^{n_r}|I_j^{(r)}|\cdot \frac{1}{|I_j^{(r)}|}\int_{I_j^{(r)}}d|\mu|(t)+o(1).
\emd
Since $|\mu|(t)$ is differentiable at $0$, we get
\md0
\Phi_r(\theta(r),d\mu)\to 0.
\emd
\end{proof}
\begin{proof}[Proof of \trm{T3}]
For any $0<r<1$ there exist a point $x_r\in (0,\pi)$, a number $0<\delta_r<\lambda(r)/4$ and a measurable set $E_r$ such that
\md3
E_r\subset (x_r-\delta_r,x_r+\delta_r),\quad |E_r|>\frac{3\delta_r}{2},\label{2-17}\\
\varphi_r(x)>\frac{\|\varphi_r\|_\infty}{2},\quad x\in E_r.\label{2-18}
\emd
From these relations it follows that $\varphi_r^*(x)>\|\varphi_r\|_\infty/2$ if $x\in(0,|x_r|)$. On the other hand, by property 3) we have $\|\varphi_r^*\|_1\le M$ for some constant $M>0$. Thus we get
\md1\label{2-19}
|x_r|\le \frac{2M}{\|\varphi_r\|_\infty},\quad 0<r<1.
\emd
Denote
\md5
n(r)=\left[\frac{4\pi}{\lambda(r)}\right]\in \ZN,\label{2-20}\\
\Delta_r=\bigcup_{k=0}^{n(r)-1}\left[\frac{2\pi k}{n(r)}-\delta_r,\frac{2\pi k}{n(r)}+\delta_r\right],\label{2-21}
\emd
and consider the function
\md0
f_r(x)=\frac{\ZI_{\Delta_r(x)}}{|\Delta_r|}=\frac{\ZI_{\Delta_r}(x)}{{2\delta_r} n(r)}.
\emd
If $x\in \ZT$ is an arbitrary point, then
\md0
x\in \left[\frac{2\pi k_0}{n(r)},\frac{2\pi (k_0+1)}{n(r)}\right)
\emd
for some $k_0$, $0\le k_0<n(r)$. Taking $\theta=x-x_r-2\pi k_0/n(r)$, from \e{2-19} and \e{2-20} we obtain
\md1\label{2-22}
|\theta |<\frac{2\pi}{n(r)}+|x_r| <\frac{2\pi\lambda(r)}{4\pi-\lambda(r)}+\frac{2M}{\|\varphi_r\|_\infty}=\lambda(r)\left(\frac{1}{2}+\frac{2M}{\lambda(r)\|\varphi_r\|_\infty}\right).
\emd
Using \e{2-1}, we may fix a sequence $r_k\nearrow 1$ such that
\md1\label{2-23}
\lambda(r_k)\|\varphi_{r_k}\|_\infty>M\cdot 4^k,\quad  k=1,2,\ldots .
\emd
From \e{2-22} and \e{2-23} we conclude
\md1\label{2-24}
|\theta |<\lambda(r),\hbox{ if } r=r_k.
\emd
Since $\varphi_r(x)\ge 0$, using \e{2-18} and \e{2-20}, for the same $x$ we get
\md9\label{2-25}
\Phi_r(x-\theta,f_r)&\\
&=\int_{\ZT} \varphi_r(2\pi k_0/n(r)+x_r-t)f_r(t)dt\\
&\ge\frac{1}{2{\delta_r} n(r)}\int_{2\pi k_0/n(r)-{\delta_r}}^{2\pi k_0/n(r)+{\delta_r}} \varphi_r(2\pi k_0/n(r)+x_r-t)dt\\
&=\frac{1}{2{\delta_r} n(r)}\int_{x_r-\delta_r}^{x_r+\delta_r} \varphi_r(u)du\\
&\ge \frac{1}{2{\delta_r} n(r)}\cdot \frac{3\delta_r}{2}\cdot\frac{\|\varphi_r\|_\infty}{2}\ge \frac{3\lambda(r)\|\varphi_r\|_\infty}{16}.
\emd
From \e{2-23}, \e{2-24} and \e{2-25} we obtain
\md1\label{2-26}
\sup_{\theta\in\lambda(r_k,x)}\Phi_{r_k}(\theta,f_{r_k})\ge 3M\cdot 4^{k-2},\quad x\in\ZT.
\emd
Using \e{2-23}, we define
\md0
f(x)=\sum_{k=1}^\infty 2^{-k}f_{r_k}(x)\in L^1(\ZT).
\emd
From \e{2-26} and \e{2-23} it follows that
\md0
\sup_{\theta\in\lambda(r_k,x)}\Phi_{r_k}(\theta,f)\ge\sup_{\theta\in\lambda(r_k,x)}2^{-k}\Phi_{r_k}(\theta,f_{r_k})\ge 3M\cdot 2^{k-4},
\emd
which implies \e{2-2}.
\end{proof}
\section{Fatou type theorems: the case $L^\infty$}
Let $\lambda(r):(0,1)\to \ZR$ be an arbitrary real function with $\lambda(r)\to 0$ as $r\to 1$. The quantity
\md0
\gamma_\lambda=\lim_{\tau\to 1}\left(\limsup_{\delta \to 0}\sup_{\tau<r<1} \int_{-\delta \lambda(r)}^{\delta \lambda(r)}\varphi_r(t)dt\right)
\emd
completely characterizes the almost everywhere $\lambda(r)$ convergence  property of $\Phi_r(x,f)$ in $L^\infty(\ZT)$.
\begin{theorem}\label{T5}
If $\{\varphi_r(x)\}$ is a regular AI consisting of even functions and   $\gamma_\lambda=0$, then for any
$f\in L^\infty( \ZT)$ the relation
\md0
\lim_{\stackrel{r\to 1}{\theta\in\lambda(r,x) }}\Phi_r\left(\theta,f\right)=f(x)
\emd
holds at any Lebesgue point.
\end{theorem}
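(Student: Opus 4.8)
The plan is to reduce to a normalized situation and then to control $\Phi_r(\theta,f)$ through the distribution function of the kernel. Translating, we may assume the Lebesgue point is $x=0$; subtracting the constant $f(0)$ and using that $\Phi_r(\theta,1)=\int_\ZT\varphi_r\to1$ uniformly in $\theta$ (property 1)), we may also assume $f(0)=0$, and, discarding the trivial case $f\equiv0$, that $\|f\|_\infty>0$. It then suffices to show $\sup_{|\theta|\le\lambda(r)}|\Phi_r(\theta,f)|\to0$. Because $\varphi_r$ is even, positive, and decreasing on $[0,\pi]$, its superlevel sets are symmetric intervals $\{\varphi_r>s\}=(-\rho_r(s),\rho_r(s))$, where $\rho_r(s)=\sup\{u\in(0,\pi]:\varphi_r(u)>s\}$ (and $\rho_r(s)=0$ for $s\ge\|\varphi_r\|_\infty$), so that $\varphi_r(\theta-t)=\int_0^{\|\varphi_r\|_\infty}\ZI_{(\theta-\rho_r(s),\,\theta+\rho_r(s))}(t)\,ds$ and, by Fubini (justified since $f\in L^\infty$ and $\varphi_r\in L^1$),
\md0
\Phi_r(\theta,f)=\int_0^{\|\varphi_r\|_\infty}\left(\int_{\theta-\rho_r(s)}^{\theta+\rho_r(s)}f(t)\,dt\right)ds.
\emd

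Given $\eta>0$, I would choose parameters in the following order. (1) Using $\gamma_\lambda=0$, pick $\tau_1<1$ and then $\delta\in(0,1)$ so that $\int_{-\delta\lambda(r)}^{\delta\lambda(r)}\varphi_r<\eta/(4\|f\|_\infty)$ for every $r\in(\tau_1,1)$. (2) Pick $\epsilon'>0$ with $\epsilon'(1+\delta)/\delta<\eta/4$. (3) By the Lebesgue-point hypothesis together with $f(0)=0$, pick $h_0\in(0,\pi)$ with $\int_{-h}^{h}|f(t)|\,dt\le\epsilon'h$ for all $0<h\le h_0$. (4) By properties 1) and 2), pick $r_0\ge\tau_1$ so that, for $r\in(r_0,1)$, one has $\lambda(r)<h_0/2$, $\|\varphi_r\|_1\le2$, and $2\pi\|f\|_\infty\,\varphi_r^*(h_0/2)<\eta/4$.

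Now fix $r\in(r_0,1)$ and $\theta$ with $|\theta|\le\lambda(r)$, and split the integral in the displayed formula according to the size of $\rho_r(s)$. On $\{s:\rho_r(s)>h_0/2\}$, which has measure $\le\varphi_r^*(h_0/2)$ (since $\rho_r(s)>h_0/2$ forces $s<\varphi_r(h_0/2)=\varphi_r^*(h_0/2)$), I bound the inner integral by $\|f\|_{L^1(\ZT)}\le2\pi\|f\|_\infty$, so this part is $<\eta/4$. On $\{s:\delta\lambda(r)<\rho_r(s)\le h_0/2\}$ the arc $[\theta-\rho_r(s),\theta+\rho_r(s)]$ lies inside $[-(|\theta|+\rho_r(s)),|\theta|+\rho_r(s)]$ with $|\theta|+\rho_r(s)\le\lambda(r)+h_0/2<h_0$, so the inner integral is $\le\epsilon'(|\theta|+\rho_r(s))\le\epsilon'(\lambda(r)+\rho_r(s))\le\epsilon'\tfrac{1+\delta}{\delta}\rho_r(s)$ (using $\lambda(r)<\rho_r(s)/\delta$ on this set); integrating and using $\int_0^\infty\rho_r(s)\,ds=\tfrac12\|\varphi_r\|_1\le1$ gives a contribution $\le\epsilon'\tfrac{1+\delta}{\delta}<\eta/4$. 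On $\{s:\rho_r(s)\le\delta\lambda(r)\}$ the inner integral is $\le2\rho_r(s)\|f\|_\infty$, and the identity $\int_{-c}^{c}\varphi_r=\int_0^\infty2\min(c,\rho_r(s))\,ds\ge\int_{\{\rho_r(s)\le c\}}2\rho_r(s)\,ds$ with $c=\delta\lambda(r)$ bounds this contribution by $\|f\|_\infty\int_{-\delta\lambda(r)}^{\delta\lambda(r)}\varphi_r<\eta/4$. Summing, $|\Phi_r(\theta,f)|<3\eta/4<\eta$ for all $r\in(r_0,1)$ and all $|\theta|\le\lambda(r)$; since $\eta>0$ is arbitrary, the theorem follows.

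The crux is the middle range $\delta\lambda(r)<\rho_r(s)\le h_0/2$. One cannot here fall back on the mechanism of \trm{T1} and \trm{T2}, because $\gamma_\lambda=0$ is strictly weaker than $\limsup_{r\to1}\lambda(r)\|\varphi_r\|_\infty<\infty$ — a tall, thin spike of vanishing $L^1$-mass superimposed on a mildly concentrated kernel produces $\gamma_\lambda=0$ while $\lambda(r)\|\varphi_r\|_\infty\to\infty$ — so \lem{L1} together with \e{TV} is unavailable. What rescues the middle range is that the Lebesgue-point estimate is scale sensitive: it yields $o(h)$, not merely $O(1)$, over an interval of length $2h$, so on the scales $\rho_r(s)>\delta\lambda(r)$ it absorbs the shift $\lambda(r)$, while the total ``mass across scales'' $\int_0^\infty\rho_r(s)\,ds=\tfrac12\|\varphi_r\|_1$ stays bounded by the approximate-identity normalization; the resulting loss of the factor $1/\delta$ is harmless because $\epsilon'$ is picked only after $\delta$. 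The remaining points that need care are keeping the order of the choices (1)--(4) consistent and verifying the two measure-theoretic identities — the layer-cake formula for $\Phi_r$ and $\int_{-c}^{c}\varphi_r=\int_0^\infty2\min(c,\rho_r(s))\,ds$ — which hold up to the usual null-set ambiguity on level sets of positive measure.
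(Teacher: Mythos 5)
Your proof is correct, and it takes a genuinely different route from the paper's.

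The paper handles \trm{T5} by writing $\varphi_r=\varphi_r^{(1)}+(1-l_r)\varphi_r^{(2)}$, where $\varphi_r^{(1)}$ is the part of the kernel sitting above the level $\varphi_r(\delta\lambda(r))$ on $|x|\le\delta\lambda(r)$, and $\varphi_r^{(2)}$ is the renormalized remainder. The small piece $\varphi_r^{(1)}$ is killed by $\gamma_\lambda=0$, and $\varphi_r^{(2)}$ is shown to satisfy $\lambda(r)\,\|\varphi_r^{(2)}\|_\infty<\varepsilon/(4\delta)$, i.e.\ a version of \e{TV}, so that \trm{T1} (hence \lem{L1}) can be invoked; the conclusion is then assembled from \e{3-2}--\e{3-4}. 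Your argument skips \trm{T1} and \lem{L1} altogether and replaces them with a layer-cake decomposition $\varphi_r(\theta-t)=\int_0^{\|\varphi_r\|_\infty}\ZI_{(\theta-\rho_r(s),\theta+\rho_r(s))}(t)\,ds$ followed by a three-way split of the $s$-integral by the scale $\rho_r(s)$. The two proofs are close cousins — \lem{L1} is in essence a discretized layer-cake approximation of $\varphi_r(x-\theta)$ by indicators of intervals — but yours is self-contained, avoids the discretization and the detour through a separate theorem, and isolates transparently where each hypothesis is used: $\gamma_\lambda=0$ for $\rho_r(s)\le\delta\lambda(r)$, property 2) for $\rho_r(s)>h_0/2$, and, on the middle range, the scale-sensitive $o(h)$ Lebesgue-point estimate together with the normalization $\int_0^\infty\rho_r(s)\,ds=\tfrac12\|\varphi_r\|_1$ to absorb the shift $|\theta|\le\lambda(r)<\rho_r(s)/\delta$. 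The parameter selections (1)--(4) are ordered correctly, the two measure-theoretic identities you flag are standard for even decreasing kernels, and the final estimate $|\Phi_r(\theta,f)|<\eta$ uniformly over $|\theta|\le\lambda(r)$ for $r>r_0$ is exactly $\lambda(r)$-convergence; so the argument is complete. One cosmetic remark: the paper's abstract prints $\lim_{\tau\to0}$ in the definition of $\gamma_\lambda$ while Section~3 and its use in the proof make clear the correct reading is $\lim_{\tau\to1}$, which is the reading you adopt.
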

\begin{theorem}\label{T6}
If $\{\varphi_r(x)\}$ is a regular AI consisting of even functions and   $\gamma_\lambda>0$,  then there exists a set $E\subset \ZT$, such that
$\Phi_r\left(x,\ZI_E\right)$ is $\lambda(r)$-divergent at any $x\in\ZT$.
\end{theorem}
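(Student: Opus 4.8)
The plan is to exhibit a single ``Cantor--dust'' set $E\subset\ZT$ whose indicator, along a suitably chosen sequence $r_k\to1$, oscillates by at least $\gamma_\lambda$ over \emph{every} window $\lambda(r_k,x)$; since $\lambda(r)$-convergence to some $A$ would force $\sup_{\lambda(r,x)}\Phi_r(\cdot,\ZI_E)-\inf_{\lambda(r,x)}\Phi_r(\cdot,\ZI_E)\to0$, this gives $\lambda(r)$-divergence at every point. Write $c:=\gamma_\lambda>0$ and $g_r(h):=\int_{-h}^{h}\varphi_r(t)\,dt$, so $g_r$ is nondecreasing, $g_r(0)=0$, $g_r(\pi)\to1$. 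Using $\gamma_\lambda>0$ and its definition one first chooses inductively $r_k\nearrow1$ and $\delta_k\searrow0$, with $\lambda_k:=\lambda(r_k)$, so that $g_{r_k}(\delta_k\lambda_k)\ge c-2^{-k}$; since the $\delta_k$ may be taken as small as we wish (at the cost only of how close $r_k$ is to $1$), we also arrange the products $\delta_k\lambda_k$ to decrease as fast as needed and $\sum_k\sqrt{\delta_k}$ to be small. As $\varphi_{r_k}$ is positive and decreasing, its masses over the dyadic annuli around $0$ sum to at most $\int_\ZT\varphi_{r_k}=O(1)$, so by pigeonhole there is a ``flat annulus'': a scale $L_k$ with $2\delta_k\lambda_k\le L_k\le\sqrt{\delta_k}\,\lambda_k$ and
\md0
\int_{L_k/2\le|t|\le2L_k}\varphi_{r_k}(t)\,dt\le\eta_k,\qquad\eta_k:=\frac{C}{\log(1/\delta_k)}\to0 .
\emd

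Next one builds $E$ by a recursive scheme over the stages $k=1,2,\dots$ having two features. First, every subinterval of $\ZT$ meets both $E$ and $\ZT\setminus E$ in positive measure; equivalently $E$ and $\ZT\setminus E$ have empty interior (so that no point is trapped in the interior of $E$ or of its complement, where $\Phi_r(\cdot,\ZI_E)$ would $\lambda(r)$-converge to $1$ or to $0$), while $|E|>0$. Second, for every $x\in\ZT$ and every $k$ there is $\theta_0=\theta_0(x,k)$ with $|\theta_0-x|\le\lambda_k/2$ such that, on $[\theta_0-L_k,\theta_0+L_k]$, the set $E$ agrees with the ``half-line'' $[\theta_0-L_k,\theta_0]$ up to a set of $\varphi_{r_k}(\theta-\cdot)$-mass $\le\eta_k$ for every $\theta$ with $|\theta-\theta_0|\le L_k/2$. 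Concretely: at stage $k$ one removes a periodic family of ``absence slits'' of width $L_k$ and period $\ll\lambda_k$ (so the slit endpoints $\theta_0$ are $\lambda_k$-dense, and one can choose $\theta_0$ within $\lambda_k/2$ of $x$ avoiding the finitely many coarser slits $m<k$), and simultaneously reinserts, inside the slits of all earlier stages, $E$-``islands'' of the analogous relative density, so that neither colour is ever solid. The slit densities $L_k/(\mathrm{period})\le\sqrt{\delta_k}$ are summable, giving $|E|>0$; and the finer perforations ($m>k$) sit at scales $\ll\delta_k\lambda_k$ and have summable density, which is exactly what keeps the exceptional set in the second feature of $\varphi_{r_k}(\theta-\cdot)$-mass $\le\eta_k$.

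Finally, fix $x$ and $k$, and put $\theta_0=\theta_0(x,k)$, $s:=L_k/2\in[\delta_k\lambda_k,\lambda_k/2]$, $\psi_k(t):=\varphi_{r_k}(\theta_0-s-t)-\varphi_{r_k}(\theta_0+s-t)$. Since $\theta_0\pm s\in\lambda(r_k,x)$, the oscillation of $\Phi_{r_k}(\cdot,\ZI_E)$ over $\lambda(r_k,x)$ is at least $\int_\ZT\ZI_E(t)\psi_k(t)\,dt$. The function $\psi_k$ is odd about $\theta_0$; by the second feature, replacing $\ZI_E$ by $\ZI_{[\theta_0-L_k,\theta_0]}$ on $[\theta_0-L_k,\theta_0+L_k]$ costs $O(\eta_k)$, and since $\varphi_{r_k}$ is even and decreasing, the contribution of $\ZT\setminus[\theta_0-L_k,\theta_0+L_k]$ is, by a monotone (telescoping) estimate, at most $2\int_{L_k/2\le|t|\le2L_k}\varphi_{r_k}\le2\eta_k$; a change of variables gives $\int_{\theta_0-L_k}^{\theta_0}\psi_k=g_{r_k}(s)-\int_{L_k/2\le t\le3L_k/2}\varphi_{r_k}$. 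Hence
\md0
\int_\ZT\ZI_E(t)\psi_k(t)\,dt=g_{r_k}(s)+O(\eta_k)\ \ge\ g_{r_k}(\delta_k\lambda_k)+O(\eta_k)\ \ge\ c-2^{-k}-O(\eta_k),
\emd
using $s\ge\delta_k\lambda_k$ and monotonicity of $g_{r_k}$. Letting $k\to\infty$ we get, for every $x\in\ZT$,
\md0
\limsup_{r\to1}\Big(\sup_{\theta\in\lambda(r,x)}\Phi_r(\theta,\ZI_E)-\inf_{\theta\in\lambda(r,x)}\Phi_r(\theta,\ZI_E)\Big)\ \ge\ c\ >\ 0 ,
\emd
so $\Phi_r(x,\ZI_E)$ is $\lambda(r)$-divergent at every $x$.

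The crux is the construction: the second feature wants $E$ full on one side of $\theta_0$ and empty on the other down to scale $\delta_k\lambda_k$, whereas the first forbids $E$ or $\ZT\setminus E$ from containing any interval. These are reconciled by making the ``full'' and ``empty'' sides full/empty only at scales $\gg\delta_k\lambda_k$ and perforating them at the much finer scales $\delta_m\lambda_m$, $m>k$; the bookkeeping verifying that these perforations, together with the coarser slits $m<k$, still leave the exceptional set of $\varphi_{r_k}(\theta-\cdot)$-mass $\le\eta_k$ --- which is precisely why the $\delta_k$, the scales $L_k$, and the rate $r_k\to1$ had to be chosen as in the first paragraph --- is the technically delicate part of the argument.
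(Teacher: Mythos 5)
Your approach is genuinely different from the paper's, but it leaves a real gap precisely where you flag the ``technically delicate part.''

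Where you differ: the paper builds $E$ by the symmetric-difference recursion $E_1=U_1$, $E_k=E_{k-1}\bigtriangleup U_k$, where $U_k=U_{n_k}^{\delta_k}$ is a \emph{periodic} union of slits of width $2\pi\delta_k/n_k$ and period $2\pi/n_k\sim\lambda(r_k)$. The key is a separate oscillation lemma: on any interval $J$ where $E\cap J=J\cap U_n^\delta$ or $J\setminus U_n^\delta$, the oscillation of $\int\varphi(\theta-\cdot)\ZI_E$ over a window of size $\sim 8\pi/n$ is at least $\int_{-\pi\delta/n}^{\pi\delta/n}\varphi-16\delta-2\pi\varphi(|J|/4)$; the test points are the center of a slit ($\theta_1$) and the center of an adjacent gap ($\theta_2$), and the bound comes from a telescoping estimate of the alternating sums $\sum a_{2k}-\sum a_{2k+1}$. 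Your scheme instead wants $E$ to look locally, at scale $L_k$, like a half-line $[\theta_0-L_k,\theta_0]$ near some $\theta_0$ within $\lambda_k/2$ of every $x$, and tests at the two points $\theta_0\pm s$ straddling that edge. Your computation of $\int_{\theta_0-L_k}^{\theta_0}\psi_k=g_{r_k}(s)-\int_{L_k/2}^{3L_k/2}\varphi_{r_k}$ and the bound $\int_{|t|\ge L_k}|\psi_k|\le 2\eta_k$ using evenness and monotonicity are correct, and the ``flat annulus'' pigeonhole for $L_k$ is a nice device. So the oscillation estimate works; what each approach buys: yours gets a cleaner lower bound ($\gamma_\lambda-o(1)$ rather than $\gamma_\lambda/8-o(1)$) at the cost of a much more intricate set, the paper gets a simple, fully specified set at the cost of a more computational lemma.

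The gap: the symmetric-difference recursion \emph{automatically} produces, near every $x$, the local picture needed for the paper's lemma — on each ``adjacent interval'' $J$ of $E_{k-1}$, $E_k$ is $J\cap U_k$ or $J\setminus U_k$, period — with no separate bookkeeping. Your local-half-line picture requires reconciling three competing demands (coarser slits $m<k$ should not cut through $[\theta_0-L_k,\theta_0+L_k]$; finer perforations $m>k$ must have small $\varphi_{r_k}(\theta-\cdot)$-mass even near the spike of $\varphi_{r_k}$; every $x$ must have such a $\theta_0$ within $\lambda_k/2$). In particular, if a stage-$m$ perforation ($m>k$) lands in the spike of $\varphi_{r_k}(\theta-\cdot)$, its mass is $\sim L_m\|\varphi_{r_k}\|_\infty$, and $\|\varphi_{r_k}\|_\infty$ is \emph{not} controlled by $1/(\delta_k\lambda_k)$; it can be arbitrarily large. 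So $L_m$ must be chosen after $r_k$, smaller than $\eta_k/\|\varphi_{r_k}\|_\infty$, and similarly the perforation density within the spike must be tied to $\|\varphi_{r_k}\|_\infty$. These constraints are plausibly satisfiable, but they are exactly the crux — you name the difficulty but don't discharge it. Until that bookkeeping is carried out (the choice of $L_m$, period, and the argument that a good $\theta_0$ exists within $\lambda_k/2$ of every $x$), the proposal is an outline, not a proof; whereas the paper's recursive symmetric difference and Lemma 3.1 make the corresponding step essentially automatic.
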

Note that if $\lambda(t)$ satisfies the condition \e{TV}, then $\gamma_\lambda=0$. One can easily construct a family of kernels $\{\varphi_r\}$ such that $\gamma_\lambda=0$, but \e{TV} is not satisfied. This means the $\gamma_\lambda=0$ is a weaker condition than \e{TV}.
\begin{proof}[Proof of \trm{T5}]
Since $\gamma_\lambda=0$ for any $0<\varepsilon <1/2$ we may chose $\delta>0$ and $0<\tau<1$, such that
\md1\label{3-1}
\int_{-\delta \lambda(r)}^{\delta \lambda(r)}\varphi_r(t)dt<\varepsilon,\quad \tau<r<1.
\emd
Then we define
\md2
\varphi^{(1)}_r(x)=\left\{
\begin{array}{lrl}
\varphi_r(x)-\varphi_r(\delta \lambda(r))& \hbox { if }&  |x|\le \delta \lambda(r),\\
0&\hbox { if }& \delta \lambda(r)<|x|<\pi.
\end{array}
\right.
\emd
and
\md0
\varphi^{(2)}_r(x)=\frac{\varphi_r(x)-\varphi^{(1)}_r(x)}{\|\varphi_r(x)-\varphi^{(1)}_r\|_{L^1}}
=\frac{\varphi_r(x)-\varphi^{(1)}_r(x)}{1-l_r}
\emd
where
\md0
l_r=\int_{-\delta \lambda(r)}^{\delta \lambda(r)}\left(\varphi_r(t)-\varphi_r(\delta \lambda(r)\right)dt<\varepsilon<\frac{1}{2},\quad \tau<r<1.
\emd
It is clear, that $\{\varphi^{(2)}_r\}$ is a regular AI and we have
\md1\label{3-2}
\varphi_r(x)=\varphi_r^{(1)}(x)+(1-l_r)\varphi_r^{(2)}(x).
\emd
From \e{3-1} it follows that
\md1\label{3-3}
\left|\int_\ZT\varphi^{(1)}_r(x-t)f(t)dt\right|\le \|f\|_\infty\int_{-\delta \lambda(r)}^{\delta \lambda(r)}\varphi_r(t)dt\le \varepsilon\|f\|_\infty
\emd
and
\md0
\varphi_r(\delta \lambda(r))\cdot 2\delta \lambda(r)<\varepsilon,\quad \tau<r<1.
\emd
Thus,using the definition of $\varphi^{(2)}_r(x)$, we get
\md0
\|\varphi_r^{(2)}\|_\infty\cdot  \lambda(r)<\frac{\varepsilon}{2\delta(1-l_r)}<\frac{\varepsilon}{4\delta}
\emd
Using this and \trm{T1} we conclude, that
\md1\label{3-4}
\lim_{\stackrel{r\to 1}{\theta\in\lambda(r,x) }}\int_\ZT\varphi^{(2)}_r(\theta-t)f(t)dt=f(x)
\emd
at any Lebesgue point.
Now without loss of generality we assume that $f(x)\ge 0$. If $x$ is an arbitrary Lebesgue point, using \e{3-2}, \e{3-3} and \e{3-4} we get
 \md2
 &\limsup_{\stackrel{r\to 1}{\theta\in\lambda(r,x) }}\Phi_r(\theta,f)\le \varepsilon \|f\|_\infty+f(x),\\
 &\liminf_{\stackrel{r\to 1}{\theta\in\lambda(r,x) }}\Phi_r(\theta,f)\ge -\varepsilon \|f\|_\infty+(1-\varepsilon)f(x).
 \emd
 Since $\varepsilon $ can be taken sufficiently small, we get
 \md0
 \lim_{\stackrel{r\to 1}{\theta\in\lambda(r,x) }}\Phi_r(\theta,f)=f(x),
 \emd
 and the theorem is proved.
\end{proof}
If $f(x)$ is a function defined on a set $E\subset \ZT$ we denote
\md0
\OSC_{x\in E}f(x)=\sup_{x,y\in E}|f(x)-f(y)|.
\emd
\begin{lemma}\label{L1}
Let
\md0
U_n^\delta=\bigcup_{k=0}^{n-1}\left(\frac{\pi (2k+1-\delta)}{n},\frac{\pi (2k+1+\delta)}{n}\right),\quad n\in \ZN,\quad 0<\delta<\frac{1}{2},
\emd
and $J\subset \ZT$, $\pi >|J|\ge 16\pi/n$, is an arbitrary closed interval. If a measurable set $E\subset \ZT$ satisfies either
\md0
E\cap J=J\cap U_n^\delta \hbox { or } E\cap J=J\setminus U_n^\delta,
\emd
and  $\varphi (x)\in L^\infty(\ZT)$ is an even decreasing on $[0,\pi]$ function, then
\md1\label{3-5}
\OSC_{\theta\in \left[x-\frac{4\pi}{n},x+\frac{4\pi}{n}\right]}\int_\ZT\varphi(\theta-t)\ZI_E(t)dt>\int_{-\frac{\pi\delta}{n}}^{\frac{\pi\delta}{n}}\varphi(t)dt-16\delta-2\pi\varphi\left(\frac{|J|}{4}\right),
\emd
for any $x\in J$.
\end{lemma}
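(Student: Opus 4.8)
The plan is to exhibit two points $\theta',\theta''$ in the window $\bigl[x-\tfrac{4\pi}{n},x+\tfrac{4\pi}{n}\bigr]$ at which $g(\theta):=\int_\ZT\varphi(\theta-t)\ZI_E(t)\,dt$ differs by more than the asserted amount. Write $h=\pi/n$. Since the class of admissible $E$ and the conclusion are symmetric under replacing $E\cap J$ by $J\setminus(E\cap J)$ while interchanging $\theta'$ and $\theta''$, it suffices to treat $E\cap J=J\cap U_n^\delta$. The components of $U_n^\delta$ are centred at the odd multiples of $h$, its gaps at the even multiples of $h$, consecutive such points being $h$ apart. As $|J|\ge 16h$, the interval $\bigl[x-4h,x+4h\bigr]\cap J$ has length at least $4h$, so it contains the centre $\theta'$ of some component of $U_n^\delta$ together with the centre $\theta''$ of an adjacent gap, with $|\theta'-\theta''|=h$; among the (at most two) such pairs I pick one lying as deep inside $J$ as the window permits. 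For most $x$ this makes $\operatorname{dist}(\theta',\partial J)\ge|J|/4$, and the case of $x$ within about $|J|/4$ of an endpoint of $J$ is treated separately at the end.

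Next I isolate the periodic part of the difference. Because $U_n^\delta$ is $2h$-periodic and $\theta'-\theta''$ is an odd multiple of $h$, the translate $U_n^\delta+(\theta'-\theta'')$ equals $V_n^\delta:=U_n^\delta+h$, which is disjoint from $U_n^\delta$ since $\delta<\tfrac12$. Splitting $\ZI_E=\ZI_{U_n^\delta}+(\ZI_E-\ZI_{U_n^\delta})$ (the second term supported on $\ZT\setminus J$) and translating in the $\theta''$-integral gives
$$g(\theta')-g(\theta'')=\Bigl(\int_{U_n^\delta}\varphi(\theta'-t)\,dt-\int_{V_n^\delta}\varphi(\theta'-t)\,dt\Bigr)+R,\qquad R=\int_{\ZT\setminus J}\bigl(\ZI_E-\ZI_{U_n^\delta}\bigr)(t)\bigl(\varphi(\theta'-t)-\varphi(\theta''-t)\bigr)\,dt.$$
In the parenthesis, $\varphi(\theta'-\cdot)$ is even and decreasing about $\theta'$, which is the centre of a component of $U_n^\delta$. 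Singling out that central component, whose contribution is exactly $\int_{-\pi\delta/n}^{\pi\delta/n}\varphi$, I pair every other component of $U_n^\delta$ — the one at distance $2jh$ from $\theta'$ — with the component of $V_n^\delta$ at distance $(2j-1)h$, one half–period closer to $\theta'$. By monotonicity each such pair contributes at most $2\delta h\bigl[\varphi((2j-1)h-\delta h)-\varphi(2jh+\delta h)\bigr]$ to the excess $\int_{V_n^\delta}-\int_{U_n^\delta\setminus\mathrm{central}}$; since the points $(2j-1)h-\delta h$ and $2jh+\delta h$ interleave monotonically, the sum telescopes and (using $\delta<\tfrac12$, and the normalisation $\|\varphi\|_1\le 4$ that holds for the approximate identities to which the lemma is applied) is $\le 16\delta$. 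Hence the periodic part is at least $\int_{-\pi\delta/n}^{\pi\delta/n}\varphi-16\delta$.

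It remains to bound $R$. Using $|\ZI_E-\ZI_{U_n^\delta}|\le 1$ and substituting $u=\theta'-t$,
$$|R|\le\int_{\ZT\setminus J}\bigl|\varphi(\theta'-t)-\varphi(\theta''-t)\bigr|\,dt=\int_{\ZT\setminus(\theta'-J)}\bigl|\varphi(u)-\varphi(u-h)\bigr|\,du.$$
The set $\theta'-J$ is an interval of length $|J|$ containing $0$, so its complement stays at distance $\operatorname{dist}(\theta',\partial J)$ from the origin. Splitting that complement into its two arcs and using the monotonicity of $\varphi$ on $[0,\pi]$ and on $[-\pi,0]$, each of the two resulting integrals is bounded by $\int_{d-h}^{d}\varphi\le h\,\varphi(d-h)$ with $d=\operatorname{dist}(\theta',\partial J)$; when $d\ge|J|/4\ge h$ this yields $|R|\le 2\pi\varphi(|J|/4)$. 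Combining, $g(\theta')-g(\theta'')\ge\int_{-\pi\delta/n}^{\pi\delta/n}\varphi-16\delta-2\pi\varphi(|J|/4)$, which proves the lemma whenever $\theta'$ can be placed at distance $\ge|J|/4$ from $\partial J$.

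The main obstacle is the remaining case, where $x$ lies within about $|J|/4$ of an endpoint of $J$ and the window is forced close to $\partial J$, so that $R$ is no longer absorbed by $2\pi\varphi(|J|/4)$. Here one exploits that the opposite endpoint is still at distance $\ge|J|/4$, and that truncating $E$ at the near endpoint only removes mass from $E$: keeping the full contribution of $E\cap J=U_n^\delta\cap J$ in $g(\theta')$ and comparing with $g(\theta'')$, the one–sided geometry together with the same periodicity–plus–monotonicity argument still forces the difference to exceed $\int_{-\pi\delta/n}^{\pi\delta/n}\varphi$ minus an error of order $\delta$ and minus $2\pi\varphi(|J|/4)$. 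Making the constant $16$ come out in this endpoint regime — and checking that the sign of $\theta'-\theta''$ is chosen so the near endpoint works in our favour — is the delicate point; the rest is the mechanism described above.
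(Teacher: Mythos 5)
Your core mechanism is the same as the paper's: pick two points $\theta',\theta''\in\bigl[x-\tfrac{4\pi}{n},x+\tfrac{4\pi}{n}\bigr]$ a half-period $h=\pi/n$ apart, with $\theta'$ the centre of a component of $U_n^\delta$ and $\theta''$ the centre of an adjacent gap, and then show $g(\theta')-g(\theta'')$ picks up the central mass $\int_{-\pi\delta/n}^{\pi\delta/n}\varphi$ up to two errors. The quantity $a_k=\int_{\pi(k-\delta)/n}^{\pi(k+\delta)/n}\varphi$ and the telescoping bound you use for the ``periodic'' error are exactly the paper's ingredients, and like the paper you silently need a bound on $\|\varphi\|_1$ (the paper uses $\varphi(t)\le 1/t$, i.e.\ $\|\varphi\|_1\le 2$; you assume $\|\varphi\|_1\le 4$), which is not in the statement but holds for the $\varphi_r$ to which the lemma is applied.

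The genuine gap is the endpoint case, which you yourself flag but do not resolve. Your decomposition $\ZI_E=\ZI_{U_n^\delta}+(\ZI_E-\ZI_{U_n^\delta})$ produces a remainder $R$ integrated over \emph{both} arcs of $\ZT\setminus J$, and your bound $|R|\le 2\pi\varphi(|J|/4)$ requires $\theta'$ to be at distance $\gtrsim|J|/4$ from \emph{both} endpoints of $J$. Since $\theta'$ must lie in the window of radius $4\pi/n$ around $x$, this is impossible whenever $x$ is within roughly $|J|/4-4\pi/n$ of an endpoint — and this set is nonempty as soon as $|J|>16\pi/n$. Your closing paragraph only gestures at a repair (``the delicate point''), so the proof is incomplete as written. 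The paper sidesteps this entirely by a different split: it integrates over $[\theta_2-\pi,a]$, $[a,b]$, $[b,\theta_2+\pi]$ and, after placing $\theta_1<\theta_2$ to the right of $x$ (WLOG $x$ in the left half of $J$) so that only the \emph{far} endpoint $b$ matters, observes that on $[\theta_2-\pi,a]$ one has $\varphi(\theta_1-t)-\varphi(\theta_2-t)\ge 0$ by monotonicity, so that piece contributes a nonnegative $A_1$ no matter what $E$ looks like there. The near endpoint therefore never needs a quantitative estimate. If you replace your global subtraction of $\ZI_{U_n^\delta}$ by this one-sided split and use the sign of the integrand on the near arc, the endpoint regime disappears and the argument closes.

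(Two smaller points: your bound $|R|\le 2\pi\varphi(|J|/4)$ actually needs $\operatorname{dist}(\theta',\partial J)\ge|J|/4+h$, not just $\ge|J|/4$, because of the $u\mapsto u-h$ shift; and for the case $E\cap J=J\setminus U_n^\delta$ you should subtract $\ZI_{\ZT\setminus U_n^\delta}$ rather than $\ZI_{U_n^\delta}$ to keep the remainder supported off $J$ — the paper instead passes to $E^c$ and notes the $\OSC$ is unchanged, which is cleaner.)
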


\begin{proof}
We suppose $J=[a,b]$ and
\md0
\frac{2\pi (p-1)}{n}<a\le \frac{2\pi p}{n},\quad \frac{2\pi (q-1)}{n}<b\le \frac{2\pi q}{n}.
\emd
First we consider  the case
\md1\label{3-6}
E\cap J=J \cap U_n^\delta .
\emd
If $x\in J$, then
\md0
x\in I=\left[\frac{2\pi(m-1)}{n},\frac{2\pi m}{n} \right]
\emd
for some $p\le m\le q$. Without loss of generality we may assume that the center of $I$ is on the left hand side of the center of $J$. Then we will have
\md1\label{3-7}
b-\frac{ 2\pi(m+1)}{n}\ge \frac{|J|}{2}-\frac{ 4\pi}{n}\ge \frac{|J|}{4}.
\emd
It is clear, that the points
\md1\label{3-8}
\theta_1=\frac{ 2\pi m}{n}+\frac{\pi}{n},\quad \theta_2=\frac{ 2\pi(m+1)}{n}
\emd
are in the interval $[0,x+4\pi/n]$. Besides we have
\md8
\int_\ZT\varphi(\theta_1-t)\ZI_E(t)dt&-\int_\ZT\varphi(\theta_2-t)\ZI_E(t)dt\\
&=\int_{\theta_2-\pi}^a[\varphi(\theta_1-t)-\varphi(\theta_2-t)]\ZI_E(t)dt\\
&+\int_a^b[\varphi(\theta_1-t)-\varphi(\theta_2-t)]\ZI_E(t)dt\\
&+\int_b^{\theta_2+\pi}[\varphi(\theta_1-t)-\varphi(\theta_2-t)]\ZI_E(t)dt\\
&=A_1+A_2+A_3.
\emd
Since $\varphi$ is decreasing on $[0,\pi]$ we have
\md1\label{3-9}
A_1\ge 0.
\emd
If $t\in [b,\theta_2+\pi]$ then, using \e{3-7}, we get
\md0
t-\theta_2\ge b-\theta_2\ge \frac{|J|}{4},\quad t-\theta_1\ge  \frac{|J|}{4} ,
\emd
which implies
\md1\label{3-10}
|A_3|\le 2\pi \varphi\left(\frac{|J|}{4}\right).
\emd
To estimate $A_2$ we denote
\md1\label{3-11}
a_k=\int_{\pi( k-\delta)/n}^{\pi( k+\delta)/n}\varphi(t)dt,\quad k\in \ZZ.
\emd
We have
\md1\label{3-12}
a_0=\int_{-\pi\delta/n}^{\pi\delta/n}\varphi(t)dt.
\emd
Using properties of $\varphi $ we  have $a_k=a_{-k}$ and $a_1\ge a_2\ge\ldots $. Using Chebishev inequality we have $\varphi(t)\le 1/t$. Thus we obtain
\md1\label{3-13}
a_k\le a_1=\int_{\pi( 1-\delta)/n}^{\pi( 1+\delta)/n}\varphi(t)dt\le \frac{2\pi\delta/n}{\pi(1-\delta)/n}=\frac{2\delta}{1-\delta}<4\delta,\quad k\ge 1.
\emd
Using \e{3-6}, \e{3-11}, \e{3-12} and \e{3-13}, we get
\md8
A_2&\ge \sum_{k=p}^{q-2}\int_{\pi (2k+1-\delta)/n}^{\pi (2k+1+\delta)/n}[\varphi(\theta_1-t)-\varphi(\theta_2-t)]dt-8\delta\\
&=\sum_{k=p}^{q-1}\int_{\pi(2(m-k)-\delta)/n}^{\pi(2(m-k)+\delta)/n}\varphi(t)dt
-\sum_{k=p}^{q-1}\int_{\pi(2(m-k)+1-\delta)/n}^{\pi(2(m-k)+1+\delta)/n}\varphi(t)dt-8\delta\\
&=\sum_{k=m-q+1}^{m-p}a_{2k}- \sum_{k=m-q+1}^{m-p}a_{2k+1}-8\delta\ge a_0-a_1-a_{-1}-8\delta\\
&>\int_{-\pi\delta/n}^{\pi\delta/n}\varphi(t)dt-16\delta.
\emd
Combining this with \e{3-9} and \e{3-10}, we get
\md0
\int_\ZT\varphi(\theta_1-t)\ZI_E(t)dt-\int_\ZT\varphi(\theta_2-t)\ZI_E(t)dt\ge \int_{-\frac{\pi\delta}{n}}^{\frac{\pi\delta}{n}}\varphi(t)dt-16\delta-2\pi\varphi\left(\frac{|J|}{4}\right),
\emd
which together with \e{3-8} implies \e{3-5}.  To deduce the case $E\cap J=J\setminus U_n^\delta$ notice, that for the complement $E^c$ we have
$E^c\cap J=J\cap U_n^\delta$ and so \e{3-5} holds for $E^c$. Therefore we obtain
\md8
\OSC_{\theta\in \left[x-\frac{4\pi}{n},x+\frac{4\pi}{n}\right]}&\int_\ZT\varphi(\theta-t)\ZI_E(t)dt\\
&=\OSC_{\theta\in \left[x-\frac{4\pi}{n},x+\frac{4\pi}{n}\right]}\left(\|\varphi \|_1-\int_\ZT\varphi(\theta-t)\ZI_E(t)dt\right)\\
&=\OSC_{\theta\in \left[x-\frac{4\pi}{n},x+\frac{4\pi}{n}\right]}\left(\int_\ZT\varphi(\theta-t)\ZI_{E^c}(t)dt\right)\\
&>\int_{-\frac{\pi\delta}{n}}^{\frac{\pi\delta}{n}}\varphi(t)dt-16\delta-2\pi\varphi\left(\frac{|J|}{4}\right),
\emd
which completes the proof of the lemma.
\end{proof}
\begin{proof}[Proof of \trm{T6}]
Since $\gamma_\lambda>0$, there exist sequences $\delta_k\searrow 0$ and $r_k\to 1$, such that
\md1\label{3-0}
\int_{-\delta_k \lambda(r_k)}^{\delta_k \lambda(r_k)}\varphi_{r_k}(t)dt>\frac{\gamma_\lambda}{2}, \quad k=1,2,\ldots .
\emd
Denote
\md1\label{3-14}
U_k=U_{n_k}^{\delta_k},\quad n_k=\left[\frac{\pi}{\lambda(r_k)}\right],
\emd
where $U_n^\delta$ is defined in the \lem{L1}.
Define the sequences of measurable sets $E_n$ by
\md0
E_1=U_1,\quad E_k=E_{k-1}\bigtriangleup U_k=(E_{k-1}\setminus U_k)\cup(U_k\setminus E_{k-1}),\quad k>1
\emd
We say $J$ is an adjacent interval for $E_k$, if it is a maximal interval containing either in $E_k$ or $(E_k)^c$. The family of all this intervals form a covering of whole $\ZT$.
It is easy to observe, that a suitable selection of  $\delta_k$ and $r_k$ may provide
\md3
&\varphi_{r_k}\left(\frac{|J|}{4}\right)<\frac{\gamma_\lambda}{16\pi},\hbox { if } J \hbox{ is adjacent for } E_{k-1},\label{3-15}\\
&\delta_j\le \frac{\gamma_\lambda}{2^{j+5}\|\varphi_{r_k}\|_\infty},\quad j\ge k+1,\label{3-16}
\emd
It is easy to observe, that if $k<m$, then
\md1\label{3-17}
\|\ZI_{E_k}-\ZI_{E_m}\|_1=|E_k\bigtriangleup E_m|\le \sum_{j\ge k+1}|U_j|
\emd
This implies, that $\ZI_{E_n}(t)$ converges to a function $f(t)$ in $L^1$. Using Egorov's theorem, we conclude that $f(t)=\ZI_E(t)$ for a measurable set $E\subset \ZT$. Tending  $m$ to infinity,  from \e{3-16} and \e{3-17} we get
\md1\label{3-18}
|E_k\bigtriangleup E|\le\left|\bigcup_{j\ge k+1} U_j\right|\le 2\pi\sum_{j\ge k+1}\delta_j\le \frac{\gamma_\lambda}{16\|\varphi_{r_k}\|_\infty}.
\emd
Fix a point $x\in\ZT$. We have $x\in J$ where $J$ is an adjacent interval for $E_{k-1}$.
From the definition of $E_k$ it follows that either
\md0
E_k\cap J=J\cap U_k \hbox { or } E_k\cap J=J\setminus U_k.
\emd
From \e{3-14} we have
\md0
\lambda(r_k,x)=(x-\lambda(r_k),x+\lambda(r_k))\subset \left[x-\frac{4\pi}{n_k},x+\frac{4\pi}{n_k}\right].
\emd
Thus, applying \lem{L1}, \e{3-0} and \e{3-15}, we get
\md8
\OSC_{\theta\in \lambda(r_k,x)}&\Phi_{r_k}(\theta,\ZI_{E_k})\\
&\ge\OSC_{\theta\in \left[x-\frac{4\pi}{n_k},x+\frac{4\pi}{n_k}\right]}\Phi_{r_k}(\theta,\ZI_{E_k})\\
&\ge\int_{-\frac{\pi\delta_k}{n_k} }^{\frac{\pi\delta_k}{n_k}}\varphi_{r_k}(t)dt-16\delta_k-2\pi\varphi_{r_k}\left(\frac{|J|}{4}\right)\\
&\ge\int_{-\delta_k \lambda(r_k)}^{\delta_k \lambda(r_k)}\varphi_{r_k}(t)dt-16\delta_k-\frac{\gamma_\lambda}{8}\\
&\ge\frac{\gamma_\lambda}{4}-16\delta_k.
\emd
From \e{3-18} we conclude
\md8
\OSC_{\theta\in \lambda(r_k,x)}&\Phi_{r_k}(\theta,\ZI_E)\\
&>\OSC_{\theta\in \lambda(r_k,x)}\Phi_{r_k}(\theta,\ZI_{E_k})-\frac{\gamma_\lambda}{16}\ge \frac{\gamma_\lambda}{8}-16\delta_k,
\emd
which completes the proof of theorem since $\delta_k\to 0$.
\end{proof}
\section{Final remarks}

In the definition of $\lambda(r)$-convergence the range of the parameter $r$ is $(0,1)$ with the limit point $1$. Certainly it is not essential in the theorems. We could take any set $Q\in\ZR$ with limit point $r_0$ which is either a finite number or $\infty$. We may define an approximation of the identity on the real line  to be a family of functions $\varphi_r\in L^\infty(\ZR)\cap L^1(\ZR)$, $r>0$,  satisfying the same conditions 1)-3) as AI on $\ZT$ has. We have just make a little change in the condition 2), that is $\left\|\varphi_r^*\cdot \ZI_{\{|t|\ge \delta\}}\right\|_1\to 0$ as $r\to 0$, $\delta>0$. In this case usually convergence is considered while $r\to 0$. Analogous results can be formulated and proved for the integrals
\md1\label{4-1}
\Phi_r(x,d\mu)=\int_\ZR \varphi_r(t-x)d\mu(t), \quad r>0.
\emd
And it can be done just repeating the above proofs after miserable changes.

Any function $\Phi(x)\in L^\infty(\ZR)\cap L^1(\ZR)$ defines an approximation of identity by $\varphi_r(x)=r\Phi(x/r)$ as $r\to 0$. The operators corresponding to such kernels in higher dimensional case investigated by E.~M.~Stain  (\cite{Ste}, p. 57). We note for such kernels we have $\|\varphi_r\|_\infty=r^{-1}\|\Phi\|_\infty$ and therefore the condition \e{TV} takes the form $\lambda(r)\le cr$. This bound  characterizes the nontangential convergence in the upper half plane and  it turns out to be a necessary and sufficient condition for almost everywhere $\lambda(r)$-convergence of the integrals \e{4-1}.

P.~Sj\"{o}gren (\cite{Sog1}, \cite{Sog2}, \cite{Sog3}),  J.-O.~R\"{o}nning (\cite{Ron1}, \cite{Ron2}, \cite{Ron3}), I.~N.~Katkovskaya and V.~G.~ Krotov (\cite{Kro}) considered the square root Poisson integrals
\md1\label{a1}
\zP_r^0(x,f)=\frac{1}{ c(r)}\int_\ZT [P_r(x-t)]^{1/2} f(t)dt,
\emd
where
\md0
c(r)=\int_\ZT [P_r(t)]^{1/2}dt
\emd
is the normalizing coefficient. They proved, that
\md0
\lim_{r\to 1}\zP_r^0 (x+\theta(r),f)=f(x) \hbox { a.e. }
\emd
whenever $f\in L^p(\ZT)$, $1\le p\le \infty$,  and
\md2
|\theta(r)|\le \left\{
\begin{array}{lcl}
c(1-r)\left(\log\frac{1}{1-r}\right)^p&\hbox { if } &1\le p<\infty,\\
(1-r)^{1-\varepsilon},&\hbox { if } & p=\infty.
\end{array}
\right.
\emd
The case of $p=1$ is proved in \cite{Sog1}, $1<p\le \infty$ is considered in \cite{Ron1}, \cite{Ron2}. They provide also some weak type inequalities for the maximal operators of square root Poisson integrals.  In the paper (\cite{Kro}) authors obtained weighted strong type inequalities for the same operators.  The cases $p=1$ and $p=\infty$ are consequences of the \trm{T1} with an additional information about the points where the convergence occurs.

At the end of the paper we would like to bring couple of consequences of our theorems, that we consider interesting.
\begin{corollary}
If $\sigma_n(x,f)$ are the Fejer means of Fourier series of a function $f\in L^1(\ZT)$ and $\theta_n=O(1/n)$, then $\sigma_n(x+\theta_n,f)\to f(x)$ at any Lebesgue point $x\in \ZT$.
\end{corollary}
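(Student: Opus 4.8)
The plan is to realise the Fej\'er means as a particular case of the integrals \e{Phi} and then to invoke \trm{T2}. Recall that $\sigma_n(x,f)=\frac{1}{2\pi}\int_\ZT K_n(x-t)f(t)\,dt$, where $K_n$ is the Fej\'er kernel,
\[
K_n(t)=\sum_{|j|\le n}\left(1-\frac{|j|}{n+1}\right)e^{ijt}=\frac{1}{n+1}\left(\frac{\sin((n+1)t/2)}{\sin(t/2)}\right)^{2}.
\]
Put $\varphi_n(t)=K_n(t)/(2\pi)$; this is an even nonnegative function and $\Phi_n(x,f)=\sigma_n(x,f)$. I would let the index $n\in\ZN$ (with limit point $\infty$) play the role of the parameter $r$; by the opening remark of the final section such a change of the parameter set is admissible in \trm{T2}, and alternatively one may reparametrise by $r_n=1-1/n$.

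First I would check that $\{\varphi_n\}$ is an approximation of the identity. Property~1) is immediate, since $\frac{1}{2\pi}\int_\ZT K_n=1$ for every $n$. For 2) and 3) I would use the elementary estimate $0\le K_n(t)\le\min\{\,n+1,\ \pi^{2}/((n+1)t^{2})\,\}$ for $0<|t|\le\pi$ (the first bound from $K_n(0)=n+1$, the second from $|\sin(t/2)|\ge|t|/\pi$). It yields $\varphi_n^*(x)\le\frac{1}{2\pi}\min\{\,n+1,\ \pi^{2}/((n+1)x^{2})\,\}$, which tends to $0$ for every fixed $x\ne 0$ (property 2) and whose $L^1$-norm, after splitting the integral at $|x|\sim\pi/(n+1)$, stays bounded uniformly in $n$ (property 3). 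Since $K_n$ oscillates and is \emph{not} monotone on $[0,\pi]$, the family $\{\varphi_n\}$ is not a regular AI, so one must use \trm{T2} and not \trm{T1}.

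Next, as $\theta_n=O(1/n)$, fix a constant $c\ge 1$ with $|\theta_n|\le c/n$ for all $n$ and set $\lambda(n)=c/n$. Then $\lambda(n)\searrow 0$ and
\[
\limsup_{n\to\infty}\lambda(n)\,\|\varphi_n\|_\infty=\limsup_{n\to\infty}\frac{c}{n}\cdot\frac{n+1}{2\pi}=\frac{c}{2\pi}<\infty,
\]
so \e{TV} holds. Given $f\in L^1(\ZT)$ put $d\mu(t)=f(t)\,dt$; then $\mu$ is absolutely continuous, its total variation equals $\|f\|_1$, hence $\mu\in\BV(\ZT)$, and at a Lebesgue point $x_0$ of $f$ the function $\mu$ is strongly differentiable with $\mu'(x_0)=f(x_0)$ — this is exactly the remark made right after \trm{T1}. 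Applying \trm{T2} to $\{\varphi_n\}$, $\lambda(n)$ and $\mu$ gives
\[
\sup_{\theta\in[x_0-\lambda(n),\,x_0+\lambda(n)]}\bigl|\Phi_n(\theta,d\mu)-f(x_0)\bigr|\longrightarrow 0,
\]
and since $x_0+\theta_n$ lies in this interval for every $n$, we obtain $\sigma_n(x_0+\theta_n,f)=\Phi_n(x_0+\theta_n,f)\to f(x_0)$, as claimed.

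The only step carrying any content is the verification of properties 2)--3) for the normalised Fej\'er kernel, that is, the routine estimates indicated above; everything else is bookkeeping, with the move from the parameter interval $(0,1)$ to the discrete set $\ZN$ covered by the opening remark of the final section.
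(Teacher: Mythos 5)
Your proof is correct and is the natural (indeed, essentially the only reasonable) way to derive the corollary from the paper's machinery; the paper itself leaves the corollary unproved, evidently regarding it as a direct application of \trm{T2}. You correctly identify the two points that carry substance: (i) the normalised Fej\'er kernel $\varphi_n=K_n/(2\pi)$ is an AI but \emph{not} a regular one, because $K_n$ oscillates and is not monotone on $[0,\pi]$, so \trm{T1} is unavailable and one must use \trm{T2} together with the paper's remark that, for $d\mu=f\,dt$, strong differentiability at $x_0$ is precisely the Lebesgue-point condition; and (ii) the bound $\|\varphi_n\|_\infty=(n+1)/(2\pi)$ makes \e{TV} hold with $\lambda(n)=c/n$. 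Your verification of AI properties 2) and 3) via $0\le K_n(t)\le\min\{n+1,\ \pi^2/((n+1)t^2)\}$ is standard and correct, as is the reparametrisation $n\in\ZN$ with limit point $\infty$, explicitly permitted by the opening remark of the final section.
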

\begin{corollary}
If $S_n(x,f)$ are the partial sums of a function $f\in L^1(0,1)$ in Franklin system and  $\theta_n=O(1/n)$, then $S_n(x+\theta_n,f)\to f(x)$ at any Lebesgue point $x\in \ZT$.
\end{corollary}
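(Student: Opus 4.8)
The plan is to deduce the statement from \trm{T1}. The difficulty is that the Franklin partial sum $S_n(x,f)=\int_0^1K_n(x,t)f(t)\,dt$, with Dirichlet kernel $K_n(x,t)=\sum_{i=0}^nf_i(x)f_i(t)$, is not a convolution, so none of the theorems of Sections~2--3 applies to it verbatim. The bridge I would use is Ciesielski's classical pointwise estimate for the Franklin kernel: there are constants $C\ge1$ and $q\in(0,1)$ with
\md0
|K_n(x,t)|\le Cn\,q^{n|x-t|},\qquad x,t\in[0,1],\ n\ge1 .
\emd
This is the only external ingredient; after it the argument is essentially a repetition of the proof of \trm{T1}, carried out with the majorant kernel $\psi_n(u)=n\,q^{n|u|}$ on $\ZR$.

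First I would fix a Lebesgue point $x\in(0,1)$ of $f$ and use that the Franklin system reproduces constants, so $S_n(\theta,1)=1$ for every $\theta\in[0,1]$ and every $n$; hence, for $\theta\in[0,1]$,
\md0
S_n(\theta,f)-f(x)=\int_0^1K_n(\theta,t)\,[f(t)-f(x)]\,dt .
\emd
Writing $g(t)=|f(t)-f(x)|$ and extending $g$ by $0$ to $\ZR$, the kernel estimate gives
\md0
|S_n(\theta,f)-f(x)|\le Cn\int_\ZR q^{n|\theta-t|}g(t)\,dt = C'\int_\ZR\widetilde\psi_n(\theta-t)g(t)\,dt = C'\,\widetilde\Phi_n(\theta,g\,dt),
\emd
where $\widetilde\psi_n=\psi_n/\|\psi_n\|_1$ and $C'=C\|\psi_n\|_1$. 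A direct computation shows $\|\psi_n\|_1$ is independent of $n$, so $\{\widetilde\psi_n\}$ is a regular approximation of the identity on $\ZR$ (even, positive, decreasing away from $0$, total mass $1$) with $\|\widetilde\psi_n\|_\infty\asymp n$.

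Next I would apply the real-line form of \trm{T1} (sanctioned by the final remarks) to the finite measure $d\mu=g\,dt$, with $\lambda(n)=M'/n$, where $M'$ is chosen so that $|\theta_n|\le\lambda(n)$ for all $n$. Condition \e{TV} holds because $\lambda(n)\|\widetilde\psi_n\|_\infty\asymp1$. Since $x$ is a Lebesgue point of $f$, the symmetric averages $\tfrac1{2h}\int_{x-h}^{x+h}g$ tend to $0$, hence so do the one-sided averages $\tfrac1h\int_x^{x\pm h}g$, so $\mu$ is differentiable at $x$ with $\mu'(x)=0$. Then \trm{T1} yields
\md0
\sup_{\theta\in\lambda(n,x)}\widetilde\Phi_n(\theta,g\,dt)\longrightarrow 0 ,
\emd
and combining this with the displayed majorization and $x+\theta_n\in\lambda(n,x)$ gives $S_n(x+\theta_n,f)\to f(x)$.

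The main obstacle is exactly the first step: the earlier theorems do not ``see'' the Franklin operator, because it is not translation invariant, and the whole point of the argument is the reduction afforded by Ciesielski's exponential bound, which dominates $|K_n|$ by a genuine convolution kernel whose $L^\infty$ norm grows only linearly in $n$, which is precisely the regime in which \e{TV} holds for $\lambda(n)\asymp1/n$. After that, constant reproduction turns the problem into \trm{T1}, in complete analogy with the Fej\'er corollary, where the kernel is already a convolution. The only minor point to watch is that $\theta=x+\theta_n$ and indeed the whole interval $\lambda(n,x)$ lie in $(0,1)$ once $n$ is large, which holds because $x$ is interior and $\theta_n\to0$; the finitely many small $n$ are irrelevant.
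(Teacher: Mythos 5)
The paper states this corollary without giving a proof, so there is nothing explicit to compare your argument against; but your proposal is correct and is almost certainly the intended route. The one substantive idea --- that Ciesielski's exponential estimate $|K_n(x,t)|\le Cn\,q^{n|x-t|}$ lets you dominate the non-convolution Franklin kernel by the genuine even, regular, convolution kernel $\psi_n(u)=n\,q^{n|u|}$, whose $L^1$ norm is independent of $n$ and whose $L^\infty$ norm grows like $n$ --- is exactly the bridge needed, and once you subtract $f(x)$ via reproduction of constants, $S_n(\theta,f)-f(x)=\int K_n(\theta,t)[f(t)-f(x)]\,dt$, the claim reduces to the real-line form of \trm{T1} applied to the measure $|f(\cdot)-f(x)|\,dt$, which has zero derivative at the Lebesgue point $x$, with condition \e{TV} satisfied because $\lambda(n)\|\widetilde\psi_n\|_\infty\asymp 1$ when $\lambda(n)\asymp 1/n$. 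The housekeeping you flag (extension of $g$ by zero to $\ZR$, the approach interval $\lambda(n,x)$ eventually lying inside $(0,1)$, and the modified condition 2) on the line holding because $\int_{|u|\ge\delta}\widetilde\psi_n\to 0$) all checks out.
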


\thebibliography{99}
\bibitem{Aik1}
H. Aikawa,  \textit{Harmonic functions having no tangential limits}, Proc. Amer. Math. Soc., 1990, vol. 108, no. 2, 457--464.
\bibitem{Aik2}
H. Aikawa,  \textit{Harmonic functions and Green potential having no tangential limits}, J. London Math. Soc., 1991, vol. 43, 125--136.
\bibitem{DiBi}
F. Di Biase,  \textit{Tangential curves and Fatou's theorem on trees }, J. London Math. Soc., 1998, vol. 58, no. 2, 331--341.
\bibitem{BSSW}
F. Di Biase, A. Stokolos, O. Svensson and T. Weiss,  \textit{On the sharpness of the Stolz approach}, Annales Acad. Sci. Fennicae, 2006,
vol. 31,  47--59.
\bibitem{Fat}
P. Fatou,  \textit{S\'{e}ries trigonom\'{e}triques et s\'{e}ries de Taylor, Acta Math.},  1906, vol. 30, 335--400.
\bibitem{Kro}
I. N. Katkovskaya and V. G. Krotov, \textit{Strong-Type Inequality for Convolution with Square Root of the Poisson Kernel}, Mathematical Notes, 2004, vol. 75, no. 4, 542–552.
\bibitem{Lit}
J. E. Littlewood,  \textit{On a theorem of Fatou, Journal of London Math. Soc.}, 1927, vol. 2, 172--176.
\bibitem{LoPi}
A. J. Lohwater and G. Piranian,  \textit{The boundary behavior of functions analytic in unit disk}, Ann. Acad. Sci. Fenn., Ser A1, 1957, vol. 239, 1--17.
\bibitem{NaSt}
A. Nagel and E. M. Stein ,  \textit{On certain maximal functions and approach regions}, Adv. Math., 1984, vol. 54, 83--106.
\bibitem{Ron1}
J.-O. R\"{o}nning,  \textit{Convergence results for the square root of the Poisson kernel}, Math. Scand., 1997, vol. 81, no. 2, 219--235.
\bibitem{Ron2}
J.-O. R\"{o}nning,  \textit{On convergence for the square root of the Poisson kernel in symmetric spaces of rank 1}, Studia Math., 1997,
vol. 125, no. 3, 219--229.
\bibitem{Ron3}
J.-O. R\"{o}nning,  \textit{Convergence results for the square root of the Poisson kernel in the bidisk }, Math. Scand., 1999, vol. 84, no. 1, 81--92.
\bibitem{Sae}
S. Saeki,  \textit{On Fatou-type theorems for non radial kernels, Math. Scand.}, 1996, vol. 78, 133--160.
\bibitem{Sog1}
P. Sjo\"{g}ren,  \textit{Une remarque sur la convergence des fonctions propres du laplacien \`{a} valeur propre critique},
Th\'{e}orie du potentiel (Orsay, 1983), Lecture Notes in Math., vol. 1096, Springer, Berlin, 1984, pp. 544-548
\bibitem{Sog2}
P. Sjo\"{g}ren,  \textit{Convergence for the square root of the Poisson kernel, Pacific J. Math.}, 1988, vol. 131, no 2, 361--391.
\bibitem{Sog3}
P. Sjo\"{g}ren,  \textit{Approach regions for the square root of the Poisson kernel and bounded functions}, Bull. Austral. Math. Soc., 1997, vol. 55, no 3, 521--527.
\bibitem{Ste}
E. M. Stein, Harmonic Analysis, Princeton University Press, 1993.

 \end{document}